\newtheorem{thm}{Theorem}[section]
\newtheorem{lem}[thm]{Lemma}
\newtheorem{prop}[thm]{Proposition}
\newtheorem{cor}[thm]{Corollary}
\theoremstyle{definition}
\newtheorem{defn}[thm]{Definition}
\numberwithin{equation}{section}
\title[Shake Slice and Shake Concordant Links]{Shake Slice and Shake Concordant Links}
\author{Anthony Bosman}
\address{Andrews University\\4260 Administration Dr., 49104 Berrien Springs, USA}
\email{bosman@andrews.edu}
\subjclass[2000]{Primary 57M25}
\begin{document}

\date{\today}
\begin{abstract}  
We can construct a $4$-manifold by attaching $2$-handles to a $4$-ball with framing $r$ along the components of a link in the boundary of the $4$-ball. We define a link as {\it $r$-shake slice} if there exists embedded spheres that represent the generators of the second homology of the $4$-manifold. This naturally extends $r$-shake slice, a generalization of slice that has previously only been studied for knots, to links of more than one component. We also define a relative notion of {\it shake $r$-concordance} for links and versions with stricter conditions on the embedded spheres that we call {\it strongly $r$-shake slice} and {\it strongly $r$-shake concordance}. We provide infinite families of links that distinguish concordance, shake concordance, and strong shake concordance. Moreover, for $r=0$ we completely characterize shake slice and shake concordant links in terms of concordance and string link infection. This characterization allows us to prove that the first non-vanishing Milnor $\overline{\mu}$ invariants are invariants of shake concordance. We also argue that shake concordance does not imply link homotopy.
\end{abstract}

\maketitle

\section{Introduction}

Given a knot $K$ we can form a 4-manifold $W_K^r$, for some integer $r$, by attaching a 2-handle to the ball $B^4$ along $K$ with framing $r$. If $K$ is a slice knot, then the core of the 2-handle and the slice disk form a smoothly embedded sphere in $W_K^r$ that represents a generator of the middle-dimensional homology class $H_2(W_K^r)\cong \mathbb{Z}$. In general, we will say that the knot $K$ is {\it $r$-shake slice}, a notion introduced in \cite{akbulut77}, if there is such a smoothly embedded sphere in $W_K^r$ that generates $H_2(W_K^r)$. It is natural to ask if there are $r$-shake slice knots that are not slice. In \cite{akbulut77}, Akbulut provided examples of 1-shake slice and 2-shake slice knots that are not slice. Lickorish provided additional such examples in \cite{lick}. More recently constructions for infinitely families of r-shake slice knots that are not slice have been provided in \cite{akbulut93} and \cite{abejongetc} for all nonzero $r$. It is still unknown if there are $0$-shake slice knots that are not slice.

There is also a relative version: Given knots $K\hookrightarrow S^3\times\{0\}$ and $K'\hookrightarrow S^3\times\{1\}$ and integer $r$ form the 4-manifold $W_{K,K'}^r$ by adding two 2-handles to $S^3\times[0,1]$ along $K$ and $K'$ with framing $r$. Then we call $K$ and $K'$ $r$-shake concordant if there exists a smoothly embedded 2-sphere in $W_{K,K'}^r$ representing the $(1,1)$ class of $H_2(W_{K,K'}^r)$. Cochran and Ray \cite{cochranray16} showed that there exists an infinite family of topologically slice knots that are pairwise $0$-shake concordant but distinct in smooth concordance.

In Section 2 we extend the notion of $r$-shake slice to $m$-component links by considering the 4-manifold $W_L^r$ formed by attaching $m$ 2-handles to $B^4$ along the components of $L$ with framing $r$. We say the link is $r$-shake slice if there exists embedded spheres representing the generators $(1,0,...,0),...,(0,0,...,1)$ of $H_2(W_L^r)\cong \mathbb{Z}^m$. Similarly, we may extend the notion of $r$-shake concordance to $m$-component links. We also introduce stricter versions of these notions which we call {\it strongly $r$-shake slice} and {\it strongly $r$-shake concordance} by restricting how the spheres interact with the 2-handle. Of interest, then, is how concordance, $r$-shake concordance, and strong $r$-shake concordance relate for links of more than one component. They are distinguished by the following results of Section 3:
\newtheorem*{cor:shakenotstrong}{Corollary~\ref{cor:shakenotstrong}}
\begin{cor:shakenotstrong} There exists an infinite family of 2-component links that are pairwise shake concordant, but not pairwise strongly shake concordant.\end{cor:shakenotstrong}

\newtheorem*{prop:strongbutnotconc}{Proposition~\ref{prop:strongbutnotconc}}

\begin{prop:strongbutnotconc} There exists an infinite family of 2-components links with trivial components that are all strongly shake concordant to the Hopf link, but none of which are concordant to the Hopf link. \end{prop:strongbutnotconc}

Cochran and Ray provided in \cite{cochranray16} a complete characterization of $r$-shake concordant knots in terms of concordance and winding number one satellite operators. In Section 4 we extend this characterization to links of more than one component with string link infection generalizing satellite operators:

\newtheorem*{thm:classification}{Theorem~\ref{thm:classification}}
\begin{thm:classification}
The $m$-component links $L$ and $L'$ are shake concordant if and only if there exist links obtained by string link infection $I(S,J,\mathbb{E}_\varphi)$ and $I(S', J',\mathbb{E}'_{\varphi'})$ that are concordant for some:
		\begin{itemize}
				\item $m$-component slice links $S$ and $S'$,
				\item $m$-component string links $J$, $J'$ with closures $\widehat{J}=L$ and $\widehat{J}'=L'$,
				\item and embedded multidisks $\mathbb{E}_\varphi$ that respects $L$ and $\mathbb{E}'_{\varphi'}$ that respects $L'$.
		\end{itemize}
\end{thm:classification}

We say $\mathbb{E}_\varphi$ {\it respects} $L$ if $\mathbb{E}_\varphi$ has $m$ subdisks $D_1,D_2,...,D_m$ and each link component $L_i$ of $L$ intersects the subdisk $D_j$ algebraically once when $i=j$ and algebraically zero times when $i\neq j$.

We may also characterize $r$-shake slice links. This may provide a means of exhibiting a link that is 0-shake slice but not slice; it is not known if any such link exists.

\newtheorem*{cor:classification}{Corollary~\ref{cor:classification}}
\begin{cor:classification}
	The $m$-component link $L$ is shake slice if and only if the link obtained by string link infection $I(S,J,\mathbb{E}_\varphi)$ is slice for some:
		\begin{itemize}
				\item $m$-component slice link $S$,
				\item $m$-component string link $J$ with closure $\widehat{J}=L$,
				\item and embedded multidisk $\mathbb{E}_\varphi$ that respect $S$.
		\end{itemize}
\end{cor:classification}

Finally, in Section 5 we study the Milnor $\overline{\mu}$ invariants, an important and well-studied family of concordance invariants \cite{casson} first introduced in \cite{milnor54} and \cite{milnor58}. For an $m$-component link $L$, recall that the invariant $\overline{\mu}_L(I)$ is defined for each multi-index $I=i_1i_2...i_l$ where $1\leq i_1,...,i_l\leq m$; we say $I=i_1i_2...i_l$ has length $|I|=l$. We show that the first non-vanishing Milnor invariant is an invariant of shake concordance:

\newtheorem*{thm:firstmilnor}{Theorem~\ref{thm:firstmilnor}}
\begin{thm:firstmilnor}
	If two links $L$ and $L'$ are shake concordant, then they have equal first non-vanishing Milnor invariants. That is, if for some multi-index $I$, $\overline{\mu}_L(I)\neq 0$ and $\overline{\mu}_L(J)=0$ for all $|J|<|I|$, then $\overline{\mu}_L(I)=\overline{\mu}_{L'}(I)$ and $\overline{\mu}_{L'}(J)=0$ for all $|J|<|I|$.	
\end{thm:firstmilnor}

It immediately follows that linking number is an invariant of $0$-shake concordance and that for $0$-shake slice links all $\overline{\mu}$ invariants vanish.

\subsection*{Acknowledgements}
The author worked on this project under the generous mentorship of Tim Cochran and Shelly Harvey, as well as Andrew Putman; the author is indebted to them and the other faculty and graduate students, notably Aru Ray, JungHwan Park, and Miriam Kuzbery, of Rice University for many helpful conversations. He is also appreciative of Jae Choon Cha and the POSTECH mathematics department for hosting him during an extended visit that contributed significantly to the paper.

\section{Definitions and Background}

Given a link $L$, we can form the 4-manifold $W_{L}^r$ by attaching $m$ 2-handles to $B^4$; one along each component of $L$ with framing $r$. If $L$ is an $m$-component link, we have $H_2(W_{L}^r)\cong\mathbb{Z}^m$. We then say $L$ is {\it $r$-shake slice} if there exist $m$ disjoint spheres embedded in $W_L^r$ that represent the generators $(1,0,...,0),(0,1,...0),...,(0,0,...,1)$ of $H_2(W_{L}^r)$. Equivalently, we may define $r$-shake slice in terms of surfaces by introducing the idea of an $r$-shaking:

\begin{defn}
For $L$ an $m$-component link, define a {\bf $(2n_1+1, ..., 2n_m+1)-$component $r$-shaking} of $L$ to be the link formed by taking $2n_i+1$ $r$-framed parallel copies of each component $L_i$ with $n_i+1$ copies oriented in the direction of $L_i$. We will often refer to this simply as an $r$-shaking, not specifying the number of components.
\end{defn}

In Figure \ref{fig:rShakeHopf} we depict such an $r$-shaking of the Hopf link with the boxes labeled $r$ indicating that the vertical strands passing through them are given $r$ full twists.

\begin{figure}[h]
	\begin{center}
	\includegraphics[width=.25\textwidth]{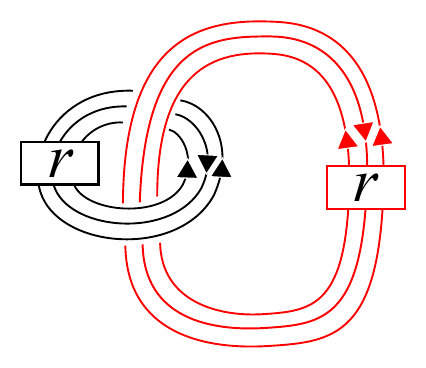}
	\caption{A $(3,3)$-component $r$-shaking of the Hopf link.}
	\label{fig:rShakeHopf}
	\end{center}
\end{figure}
\begin{defn}
We say $L\subset S^3=\partial B^4$ is {\bf $r$-shake slice} if there exist $m$ disjoint, smooth, properly embedded, compact, connected, genus zero surfaces $F_1,...,F_m$ in $D^4$ such that each $F_i$ bounds an $r$-shaking of $L_i$ and $n_{ij}$ pairs of oppositely oriented $r$-framed parallel copies of $L_j$ for $j\neq i$, such that $\sqcup_{k=1}^m F_i$ bound an $r$-shaking of $L$. If all such $n_{ij}=0$ for $j\neq i$, then we call $L$ {\bf strongly $r$-shake slice}.
\end{defn}

Given $m$-component links $L$ and $L'$, let $W_{L,L'}^r$ denote the 4-manifold formed by attaching {\it 2m} 2-handles to $S^3\times [0,1]$ along the components of $L\hookrightarrow S^3\times\{0\}$ and $L'\hookrightarrow S^3\times\{1\}$ with framing $r$. We say $L$ is {\bf $r$-shake concordant} to $L'$ if there exist $m$ embedded spheres in $W_{L,L'}^r$ representing the elements of $H_2(W_{L,L'}^r)\cong\mathbb{Z}^{2m}$ of the form $(a_1,...,a_m,b_1,...,b_m)$ where $a_i=b_i=1$ for some $1\leq i\leq m$ and $a_j=b_j=0$ for all $j\neq i$.

We can also formulate the definition of shake concordance in terms of shakings of the components of the links.
\begin{defn}
We say $m$-component links $L$ and $L'$ are {\bf $(2n_1+1,...,2n_m+1;2n_1'+1,...,2n_m'+1)$ $r$-shake concordant} if there exist disjoint, smooth, properly embedded, compact, connected, genus zero surfaces $F_1,...,F_m$ in $S^3\times[0,1]$ such that for each $1\leq i\leq m$:

\begin{itemize}
\item $F_i\cap (S^3\times\{0\})$ consists of a $2n_{ii}+1$ $r$-shaking of $L_i$ and $n_{ij}$ pairs of oppositely oriented $r$-framed parallel copies of $L_j$ for each $j\neq i$ such that $\sum_k n_{ik}=n_i$

\item $F_i\cap (S^3\times\{1\})$ consists of a $2n_{ii}'+1$ $r$-shaking of $L_i'$ and $n_{ij}'$ pairs of oppositely oriented $r$-framed parallel copies of $L_j'$ for each $j\neq i$ such that $\sum_k n_{ik}'=n_i'$.
\end{itemize}

If $n_{ij}=n_{ij}'=0$ for all $i\neq j$, then we call $L$ and $L'$ {\it strongly $r$-shake concordant}.
\end{defn}
Figure \ref{fig:link_shaking} illustrates these definitions. When we omit $r$ from the notation it is to be understood $r=0$.

\begin{figure}[ht]
\begin{center}
	\subfloat[]{\includegraphics[width=.45\textwidth]{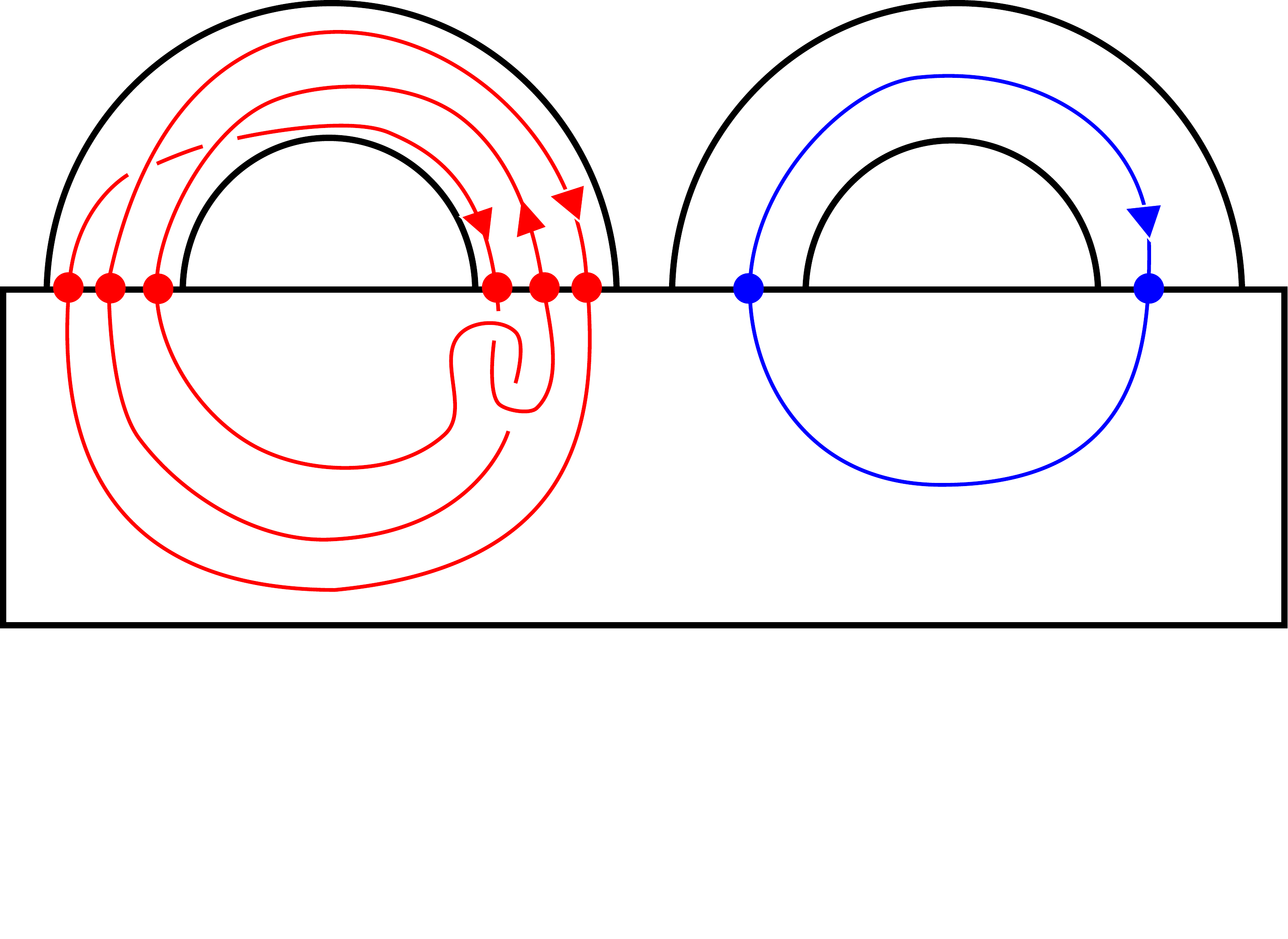}}
		\hspace{0.2in}
	\subfloat[]{\includegraphics[width=.45\textwidth]{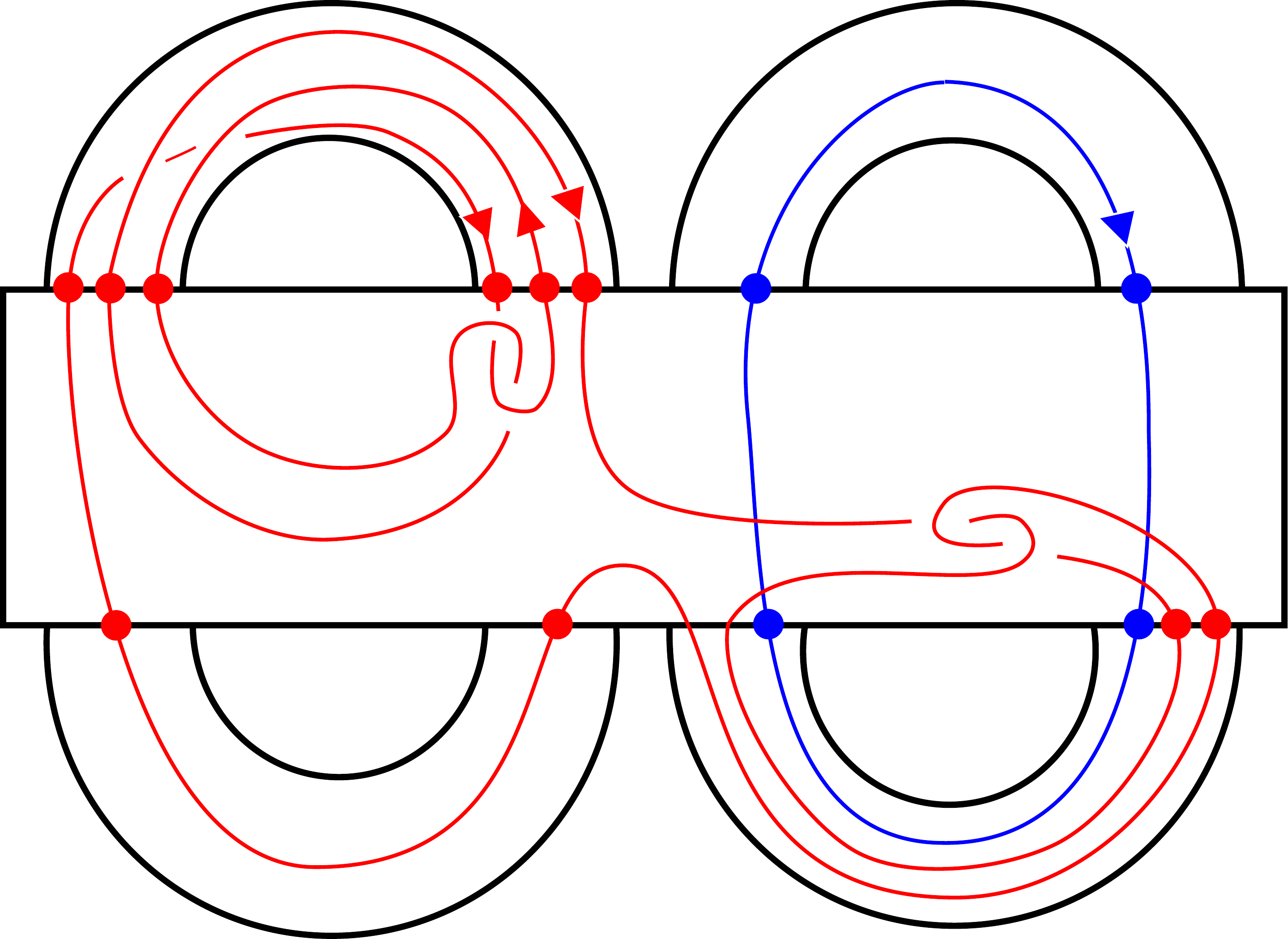}}\\
	\subfloat[]{\includegraphics[width=.45\textwidth]{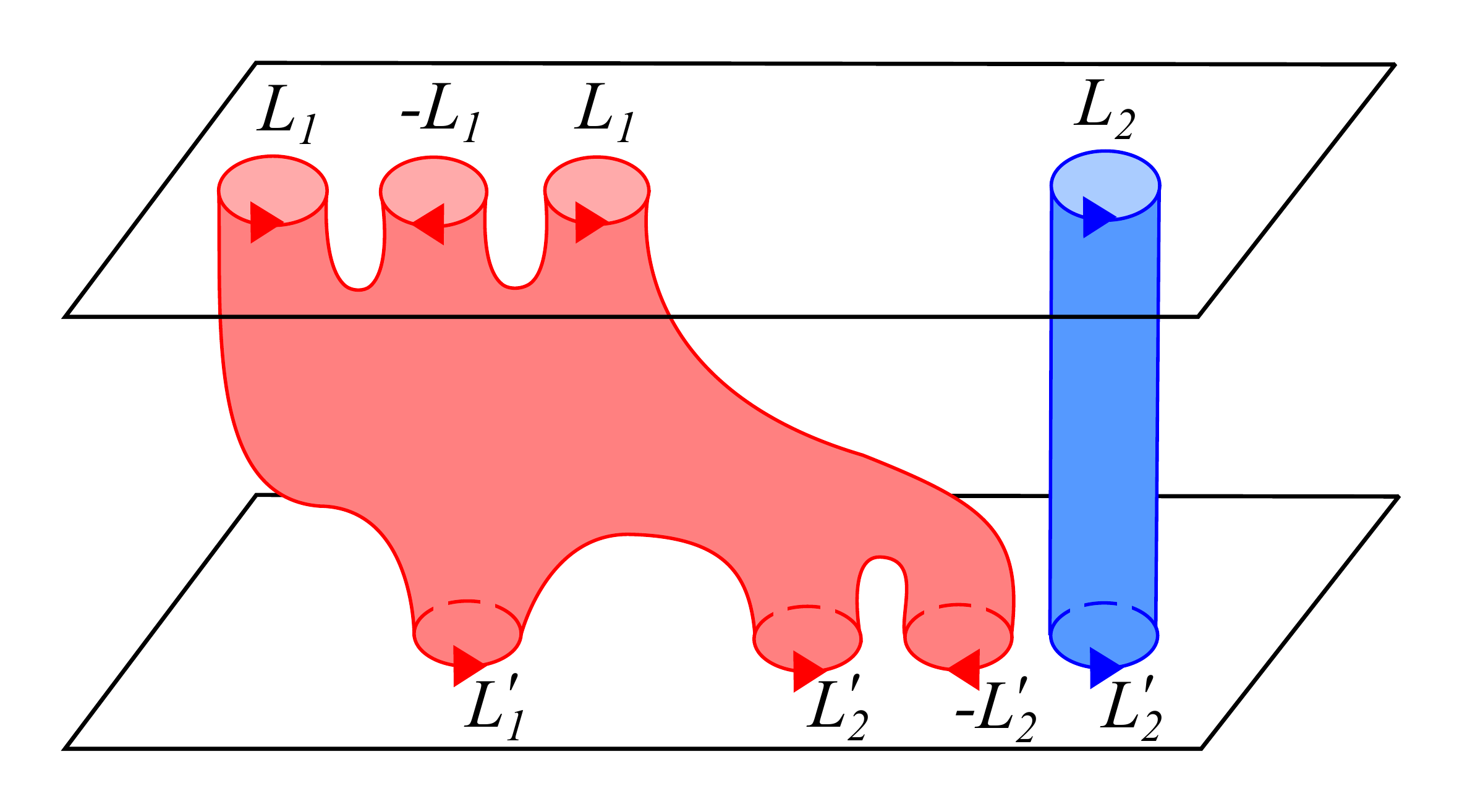}}
		\hspace{0.2in}
	\subfloat[]{\includegraphics[width=.45\textwidth]{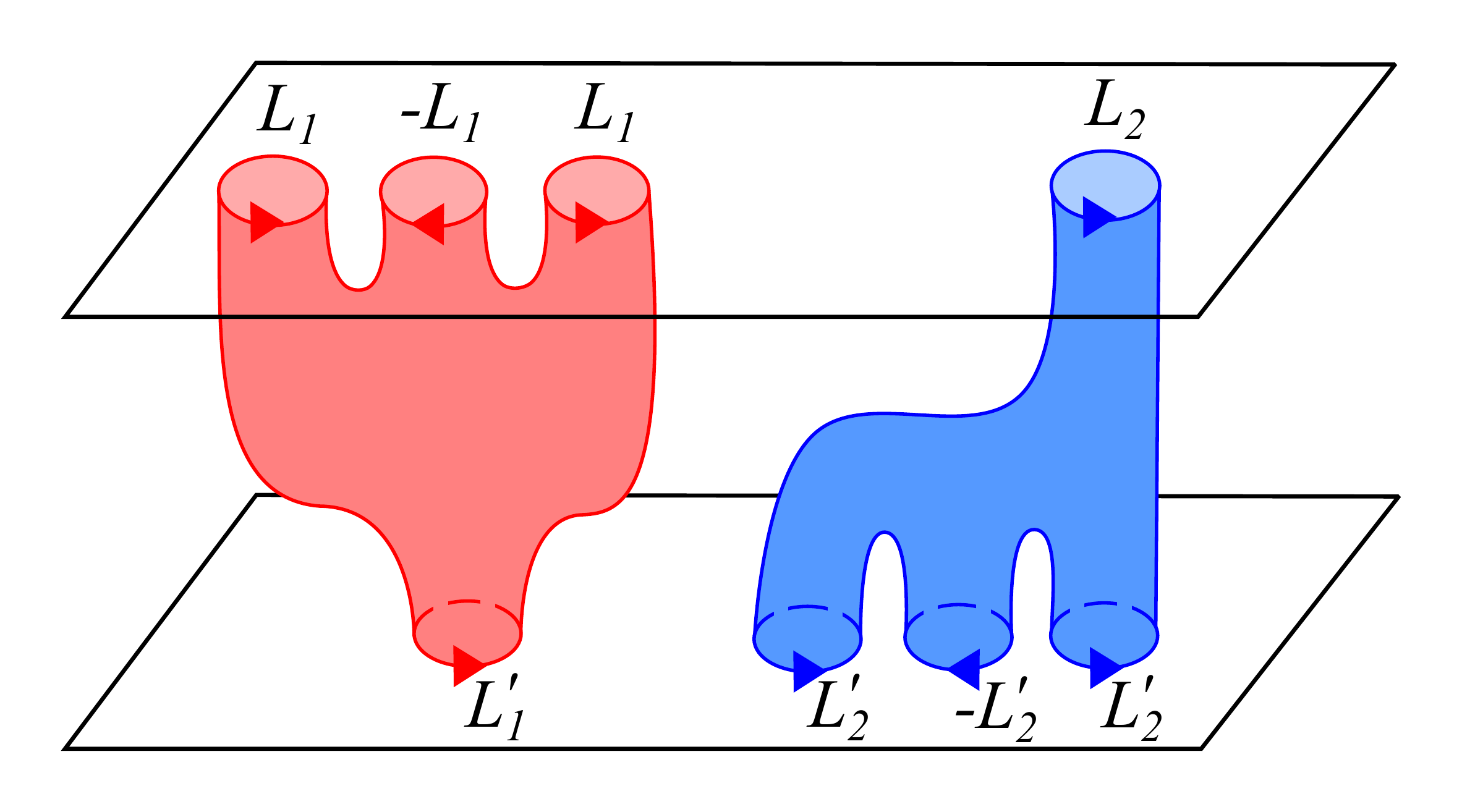}}
	\caption{Depictions of (A) homological view of a shake slice link, (B) homological view of shake concordant links, (C) $r$-shaking view of shake concordant links, and (D) $r$-shaking view of strongly shake concordant links.}
	\label{fig:link_shaking}
\end{center}
\end{figure}

For knots, strong shake concordance is equivalent to shake concordance. Moreover, if links $L=L_1\sqcup...\sqcup L_m$ and $L'=L_1'\sqcup...\sqcup L_m'$ are $(n_1,...,n_m;n_1',...,n_m')$ strongly $r$-shake concordant, then components $L_i$ and $L_i'$ are $(n_i,n_i')$ $r$-shake concordant as knots. Also, note that an $m$-component link is (strongly) $r$-shake slice if and only if it is $(n_1,...,n_m;1,...,1)$ (strongly) $r$-shake concordant to the trivial link.

\section{Shake Concordant Links That Are Not Concordant}
In this section we show the notions of concordance, $0$-shake concordance, and strong $0$-shake concordance differ from each other.

Let $h(K)$ denote the 2-component link consisting of first component $K$ and second component a meridian of $K$ as in Figure \ref{fig:ShakeNotStrongEx}.

\begin{figure}[ht]
	\begin{center}
	\includegraphics[width=.2\textwidth]{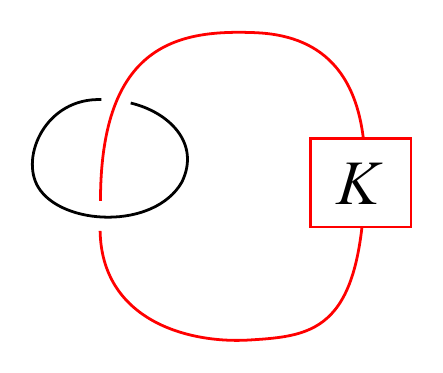}
	\caption{The 2-component link $h(K)$ consisting of $K$ and its meridian.}
	\label{fig:ShakeNotStrongEx}
	\end{center}
\end{figure}

\begin{prop}
	The links $h(K)$ and $h(J)$ are shake concordant for any knots $K$ and $J$.
	\label{prop:crossings}
\end{prop}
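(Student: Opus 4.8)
The plan is to exhibit an explicit $0$-shake concordance between $h(K)$ and $h(J)$ by choosing a convenient shaking of the meridian component. The key observation is that the meridian $\mu$ of $K$ bounds a small disk in $S^3$ meeting $K$ once; so a $3$-component $0$-shaking of $\mu$ — one "core" meridian plus a cancelling oppositely-oriented pair — can be filled in $S^3 \times [0,1]$ by an annulus running from the meridian of $K$ (at $S^3 \times \{0\}$) to the meridian of $J$ (at $S^3 \times \{1\}$), together with a small genus-zero piece absorbing the cancelling pair. More precisely, I would take $F_2$ to be a genus-zero surface whose boundary at level $0$ is a $3$-component $0$-shaking of $\mu_K$ and whose boundary at level $1$ is a $1$-component shaking (i.e. a single parallel copy) of $\mu_J$; since a meridian is an unknot that bounds a disk hitting the knot once, this is straightforward to build.

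The more substantive step is handling the first component. Here I would take $F_1$ to be built from the trace of an isotopy that "absorbs" the difference between $K$ and $J$ through the $r=0$ framing: the point is that with framing $0$ and the freedom to use a shaking with cancelling pairs of parallel copies of the \emph{other} component, one has enough room to realize the required homology class. Concretely, I expect to use the standard fact (implicit in the knot case, from \cite{akbulut77}) that a $0$-framed $2$-handle attached along a meridian can be slid to trivialize the knot it links; translating this to the surface/shaking picture, the cancelling pair of parallel meridians that appear in the shaking of the $\mu$-component (the nonzero $n_{12}$) is exactly what lets $F_1$ be a genus-zero surface bounding a shaking of $L_1 = K$ at one end and of $L_1' = J$ at the other. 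I would verify the framing and orientation bookkeeping (the $\sum_k n_{1k} = n_1$ condition) and check the surfaces are disjoint.

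The main obstacle I anticipate is making the surface $F_1$ genus zero while it mediates between two arbitrary knots $K$ and $J$: a priori the trace of a "knotting/unknotting" move is not a disk. The resolution should be that we are not required to produce a concordance — only a shake concordance — so we are allowed to pass to shakings with extra cancelling parallel strands of \emph{both} components, and these extra strands give the needed handles to be capped off. I would therefore organize the proof around first describing the shaking of $h(K)$ (and symmetrically of $h(J)$) explicitly, then exhibiting the genus-zero surfaces component by component, and finally confirming each clause of the definition of $(2n_1+1,2n_2+1; 2n_1'+1, 2n_2'+1)$-shake concordance. It may be cleanest to prove the stronger-sounding statement that both $h(K)$ and $h(J)$ are shake concordant to the Hopf link $h(U)$ (with $U$ the unknot), from which the proposition follows by transitivity of shake concordance; this isolates the one real construction — trivializing the knotted component at the cost of enlarging the shaking — into a single lemma-like argument.
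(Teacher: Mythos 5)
Your intuition — that cancelling pairs of parallel meridians let one realize crossing changes on $K$ by band-summing, and that this is what a shake concordance buys you over an honest concordance — is exactly the engine of the paper's proof. But your concrete allocation of the extra meridian copies to the surfaces is wrong, and this is not a bookkeeping slip: it makes the construction fail.

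You propose that $F_2$ (the surface attached to the meridian component) absorb the $3$-component shaking of $\mu$, bounding three parallel meridians at level $0$ and one meridian at level $1$. If that is where all the extra parallel copies live, then by the definition of an $(\cdots;\cdots)$ shake concordance the other surface $F_1$ meets level $0$ in exactly one copy of $K$ and level $1$ in exactly one copy of $J$ — it is a connected genus-zero surface with one boundary circle at each end, i.e.\ an honest concordance from $K$ to $J$. That is false for generic $K,J$; it is precisely what shake concordance is supposed to relax. The fix, which is what the paper actually does, is to put the cancelling meridian pairs into $F_1$: one takes a $(1,2n+1)$-shaking of $h(K)$, lets $F_1$ bound $K$ together with $n$ oppositely-oriented pairs of meridian copies at level $0$ (so $n_{11}=0$, $n_{12}=n$), and lets $F_2$ bound the single remaining meridian. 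The extra meridian circles in $\partial F_1$ are then band-summed to $K$ in pairs: one band performs a crossing change, the oppositely-oriented partner cancels the homological contribution. Repeating $n$ times turns $K$ into $J$ and leaves $F_1$ a connected planar surface. Your later remark that ``the nonzero $n_{12}$ is exactly what lets $F_1$ be genus zero'' is the correct statement — but it contradicts the $F_2$-absorbs-the-shaking setup you opened with, so the proposal as written is internally inconsistent on the one point that matters.

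Two secondary remarks. First, the $4$-manifold ``handle slide'' picture you invoke is a fine sanity check, but the definition is in terms of embedded planar surfaces in $S^3\times[0,1]$; the proof has to produce those surfaces explicitly, which the band-sum/crossing-change picture does and the handle-slide slogan does not. Second, your suggestion to route through $h(U)$ by transitivity assumes that shake concordance of links is transitive. The paper never establishes this (it composes shake concordances with honest concordances, which is a different and easier operation), and its proof of this proposition is deliberately direct, building one shake concordance from $h(K)$ to $h(J)$ using $n$ crossing changes. If you want the transitivity shortcut you would need to prove it first.
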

\begin{proof}
	Figure \ref{fig:crossing_change} depicts how a shake concordance accomplishes a crossing change in a link of the form $h(K)$. Position the meridian component so that it is next to the crossing in $K$ that we want to change as in Figure \ref{fig:crossing_change} (A). Then, consider a $(1,3)$ shaking of $h(K)$ as in Figure \ref{fig:crossing_change} (B). Take a band sum of one of the meridian components that has the appropriate orientation with $K$ at the crossing to change it from an overcrossing to undercrossing (or vice versa). Also take a band sum of a meridian component that has opposite orientation and band it with $K$ so as not to change $K$ as in as in Figure \ref{fig:crossing_change} (Ci).  Depending on orientations, we may need a half twist in the band as in Figure \ref{fig:crossing_change} (Cii). Attaching these bands accomplishes the desired crossing change as in Figure \ref{fig:crossing_change} (D).
	
	Notice this technique can be extended to accomplish any number of crossing changes via a shake concordance. In particular, there is a $(1,2n+1;1,1)$ shake concordance between $h(K)$ and $h(J)$ if $K$ and $J$ differ by $n$ crossing changes. \end{proof}

\begin{figure}[ht]
	\captionsetup[subfloat]{labelformat=empty}
	\subfloat[(A)]{\includegraphics[width=.22\textwidth]{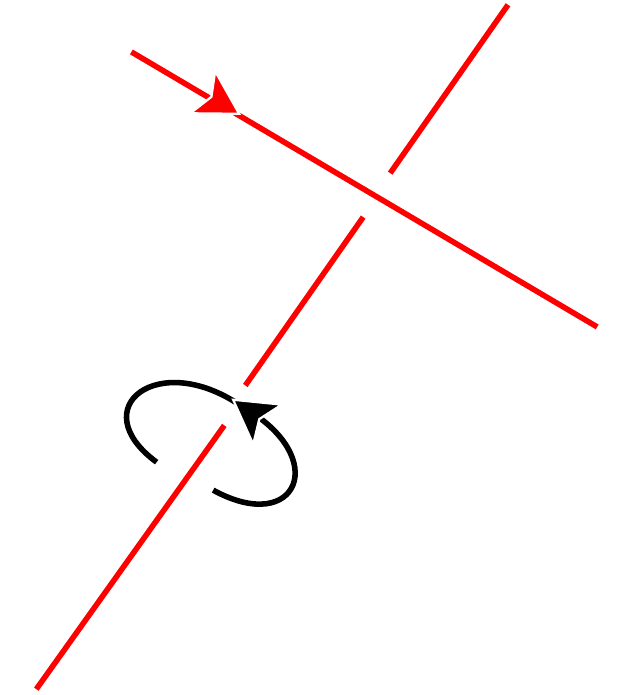}}
	\subfloat[(B)]{\includegraphics[width=.22\textwidth]{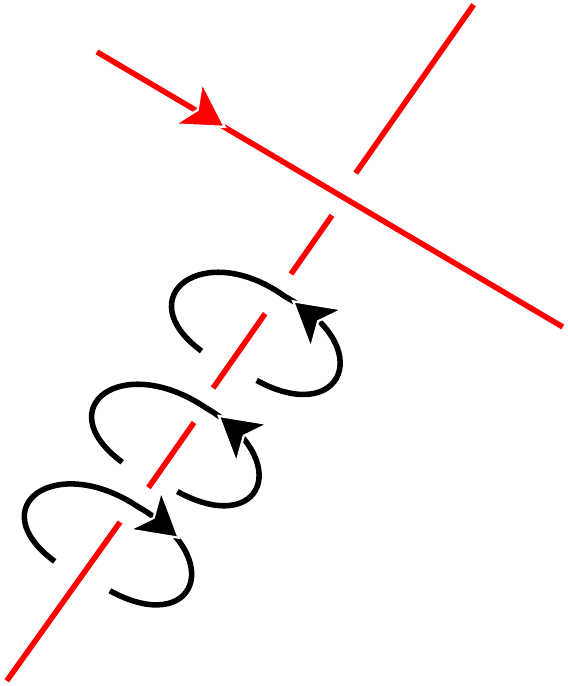}}
	\subfloat[(Ci)]{\includegraphics[width=.21\textwidth]{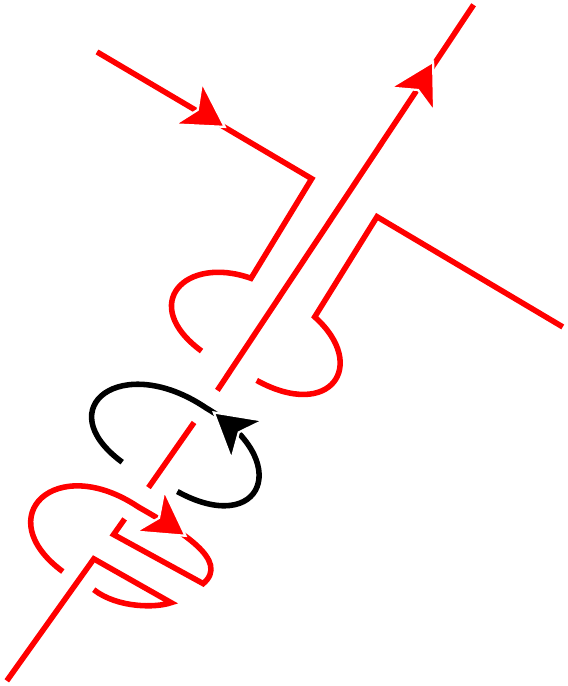}}
	\subfloat[(Cii)]{\includegraphics[width=.2\textwidth]{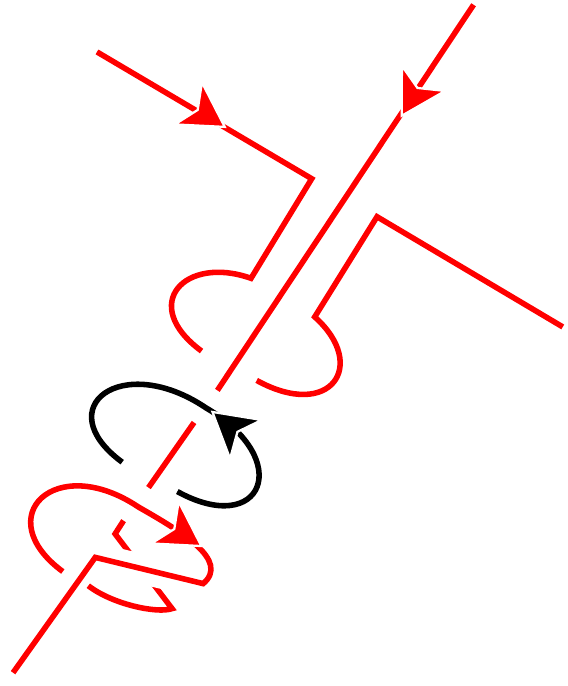}}
	\subfloat[(D)]{\includegraphics[width=.2\textwidth]{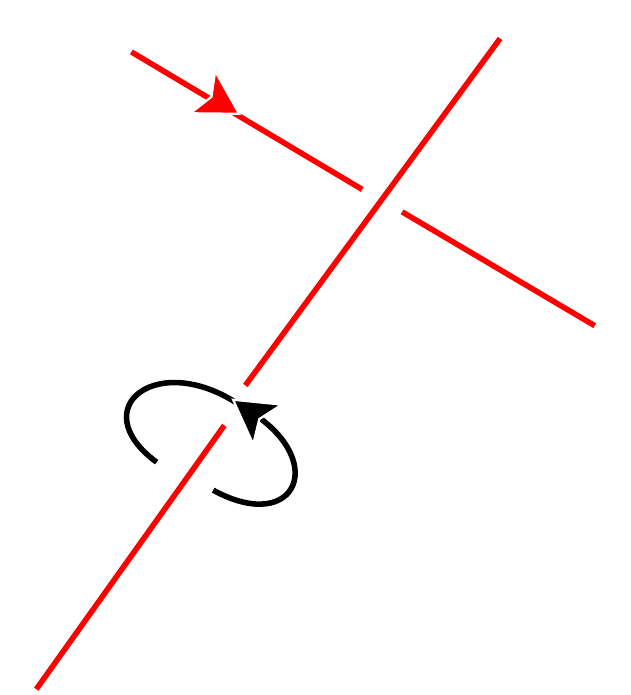}}
	\caption{Steps to accomplish crossing change via shake concordance.}
	\label{fig:crossing_change}
\end{figure}

It follows from Proposition \ref{prop:crossings} that corresponding sublinks of shake concordant links are not necessarily shake concordant! Moreover, since certain knot signatures are invariants of shake concordance \cite{akbulut77}, if we consider $K$ and $J$ of differing signatures, then $h(K)$ and $h(J)$ are not strongly shake concordant. It follows:

\begin{cor}
\label{cor:shakenotstrong}
There exists an infinite family of 2-component links that are pairwise shake concordant, but not pairwise strongly shake concordant.
\end{cor}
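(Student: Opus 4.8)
The plan is to combine Proposition~\ref{prop:crossings} with the knot-signature obstruction cited from \cite{akbulut77}. First I would recall that every knot is related to the unknot by a finite sequence of crossing changes, so by Proposition~\ref{prop:crossings} all links of the form $h(K)$ are mutually shake concordant; in particular any infinite collection $\{h(K_n)\}_{n\in\mathbb{N}}$ is pairwise shake concordant. So the only thing to arrange is that the family fail to be pairwise \emph{strongly} shake concordant, and for this I would exploit the observation recorded just before the statement: if $h(K)$ and $h(J)$ were strongly shake concordant, then their first components $K$ and $J$ would be shake concordant as knots, hence would share all shake-concordance invariants of knots.

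The key steps, in order, are as follows. (1) Choose an explicit infinite family of knots $K_n$ whose classical knot signatures $\sigma(K_n)$ are pairwise distinct — for instance $K_n = \#^n\, T_{2,3}$, the $n$-fold connected sum of the right-handed trefoil, for which $\sigma(K_n) = -2n$, or equally well $K_n$ the $(2,2n+1)$ torus knot. (2) Invoke Proposition~\ref{prop:crossings} to conclude that $h(K_i)$ and $h(K_j)$ are shake concordant for all $i,j$. (3) Invoke the fact from \cite{akbulut77} that the knot signature is a shake concordance invariant of knots, together with the remark in the excerpt that strong shake concordance of $h(K_i)$ and $h(K_j)$ would force $K_i$ and $K_j$ to be shake concordant as knots; since $\sigma(K_i)\neq\sigma(K_j)$ for $i\neq j$ this is impossible, so no two distinct members of the family are strongly shake concordant. (4) Conclude that $\{h(K_n)\}_{n\in\mathbb{N}}$ is the desired infinite family.

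I do not expect a serious obstacle here: the corollary is essentially a packaging of Proposition~\ref{prop:crossings} and the cited signature obstruction, and the two ingredients fit together directly via the reduction of strong shake concordance of $h(K),h(J)$ to shake concordance of the knots $K,J$. The one point that warrants a sentence of care is making sure the chosen $K_n$ genuinely have pairwise distinct values of \emph{some} knot invariant that is known (from \cite{akbulut77}) to be preserved under shake concordance of knots — the signature is the safe choice, and connected sums of the trefoil make the distinctness transparent since signature is additive under connected sum. Everything else is immediate.
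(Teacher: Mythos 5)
Your proof is correct and is essentially the same as the paper's: the paper also takes $T_k$ the connected sum of $k$ trefoils, notes $\sigma(T_k)=\pm 2k$, appeals to Proposition~\ref{prop:crossings} for pairwise shake concordance of the $h(T_k)$, and uses the fact that strong shake concordance of $h(K)$ and $h(J)$ forces $K$ and $J$ to be shake concordant as knots together with the signature obstruction from \cite{akbulut77}. The only cosmetic difference is that you re-derive the ``all $h(K)$ are mutually shake concordant'' step from crossing changes, whereas Proposition~\ref{prop:crossings} already states it directly.
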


\begin{proof}
Consider, for instance, the family $\{h(T_k)\}_{k=1,2,3,...}$ where $T_k$ is the connected sum of $k$ trefoil knots. As each $T_k$ has signature $\pm2k$, no two links in the family are pairwise strongly shake concordant.
\end{proof}

The proof of Proposition \ref{prop:crossings} suggests the following result.

\begin{prop}
	If two $m$-component links are link homotopic, then they are sublinks of shake concordant $2m$-component links.
\end{prop}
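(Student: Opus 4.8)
The plan is to mimic the crossing-change construction in the proof of Proposition~\ref{prop:crossings}, using one meridian per component to carry out a link homotopy. Write $L=L_1\sqcup\cdots\sqcup L_m$ and $L'=L_1'\sqcup\cdots\sqcup L_m'$, let $\mu_i$ be a meridian of $L_i$, and set $\widetilde L=L_1\sqcup\cdots\sqcup L_m\sqcup\mu_1\sqcup\cdots\sqcup\mu_m$, a $2m$-component link having $L$ as a sublink. By a standard general position argument, fix a link homotopy from $L$ to $L'$ realized by a finite sequence of ambient isotopies of $S^3$ and self-crossing changes of single components; say $c_i$ of these crossing changes occur on $L_i$, and let $\mu_i'$ be the curve that $\mu_i$ becomes at the end of the homotopy, so that $\widetilde L'=L'\sqcup\mu_1'\sqcup\cdots\sqcup\mu_m'$ is a $2m$-component link having $L'$ as a sublink. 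It suffices to produce a shake concordance between $\widetilde L$ and $\widetilde L'$.

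I would build such a shake concordance in $S^3\times[0,1]$ by laying the chosen link homotopy out along the interval. At the bottom I take the shaking of $\widetilde L$ in which $L_i$ has a single $0$-framed copy and $\mu_i$ has $2c_i+1$ parallel $0$-framed copies, $c_i+1$ of them oriented like $\mu_i$ and $c_i$ oppositely. Over the subintervals on which the homotopy is an ambient isotopy all surface sheets are products, the parallel meridian copies being carried along and slid along $L_i$ as needed. Over a subinterval realizing a self-crossing change on $L_i$ I isotope one oppositely-oriented pair among the still-unused parallel copies of $\mu_i$ to sit beside the relevant crossing and attach two bands exactly as in the proof of Proposition~\ref{prop:crossings}: one band, with a half twist if the orientations require it, carries the copy whose orientation makes the band sum change that crossing, and the other is attached so as to leave $L_i$ unchanged and serves only to keep the meridian copies orientation-balanced. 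Each crossing change on $L_i$ consumes one such pair, so after all $c_i$ of them exactly one copy of $\mu_i$ survives and the top slice is $\widetilde L'$ itself.

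It then remains to verify that the resulting sheets $F_1,\dots,F_m,F_{m+1},\dots,F_{2m}$ satisfy the definition of a shake concordance. For $1\le i\le m$, the sheet $F_i$ consists of the product annuli on the single copy of $L_i$ and on the $2c_i$ parallel copies of $\mu_i$, joined by the $2c_i$ bands that realize the $c_i$ crossing changes on $L_i$: it is smoothly and properly embedded, and since each band joins two distinct circles of a level set (the current $L_i$-strand and a previously separate parallel copy of $\mu_i$) it is connected and of genus zero, with $F_i\cap(S^3\times\{0\})$ a single copy of $L_i$ together with $c_i$ oppositely-oriented pairs of parallel copies of $\mu_i$ and $F_i\cap(S^3\times\{1\})=L_i'$. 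The sheet $F_{m+i}$ is the product annulus on the one leftover copy of $\mu_i$, running from $\mu_i$ to $\mu_i'$. The family is disjoint because each self-crossing change is supported in a small ball meeting only strands of $L_i$, into which we bring only copies of $\mu_i$, so sheets with distinct indices stay in disjoint tubular neighborhoods of the components and the parallel copies of any one $\mu_i$ are pushed off one another. Collectively the $F_k$ bound the stated shaking of $\widetilde L$ at the bottom and $\widetilde L'$ at the top, which is exactly what a shake concordance between $\widetilde L$ and $\widetilde L'$ requires.

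The main difficulty is organizational rather than geometric. One must choose the parallel copies of each meridian and the order of the band attachments so that, through possibly interleaved crossing changes on different components and the intervening isotopies, the leftover copy of each $\mu_i$ stays clear of every crossing-change site, every band genuinely connects the $L_i$-strand to a previously separate circle (so that the sheets remain connected and of genus zero), and sheets of distinct indices never meet; each of these is arranged by confining the support of every crossing change to a small ball and keeping the pushoffs and bands inside a tubular neighborhood of the relevant component, but making this precise is the bulk of the work. It is worth noting that the scheme depends on link homotopy permitting only self-crossing changes: a band sum with $\mu_i$ that altered a crossing between $L_i$ and a different component would change a linking number, and since link homotopic links automatically have the same linking numbers this hypothesis is exactly what the meridian-shaking trick is suited to.
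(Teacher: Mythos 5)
Your argument is correct and mirrors the paper's own proof: both add one meridian per component and then reuse the crossing-change-via-shaking trick of Proposition~\ref{prop:crossings} to convert each self-crossing change of the link homotopy into a band attachment on an oppositely-oriented pair of meridian pushoffs. The only difference is that you spell out the Morse/Euler-characteristic bookkeeping and the disjointness issues that the paper leaves to the reader.
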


\begin{proof}
Suppose $m$-component links $L$ and $L'$ are link homotopic, then $L'$ can be obtained from $L$ by ambient isotopy and crossing changes between arcs of the same component of $L$. However, as in the proof of Proposition \ref{prop:crossings}, these crossing changes can also be obtained via shake concordance of the components of $L$ with added meridian components. Therefore, if we let $J$ (resp. $J'$) denote the $2m$-component link consisting of the $m$ components of $L$ (resp. $L'$) and $m$ meridian components, one for each component of $L$, then we have $J$ is shake concordant to $J'$. See Figure \ref{fig:HomotopicLinksShake}.
\end{proof}

\begin{figure}[!h]
	\begin{center}
	\subfloat[Homotopic links.]{\includegraphics[width=.5\textwidth]{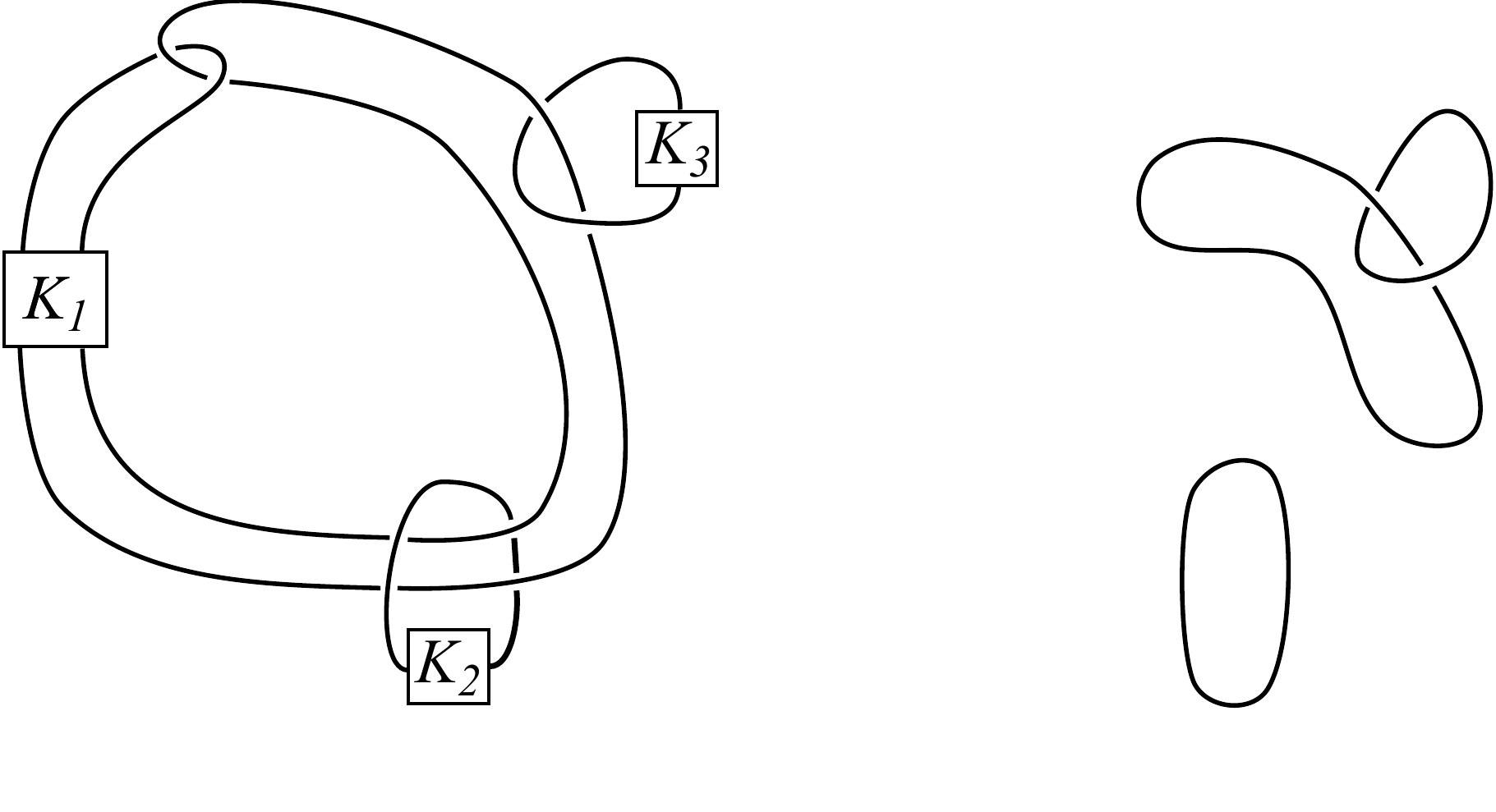}}\\
	\subfloat[Shake concordant links.]{\includegraphics[width=.5\textwidth]{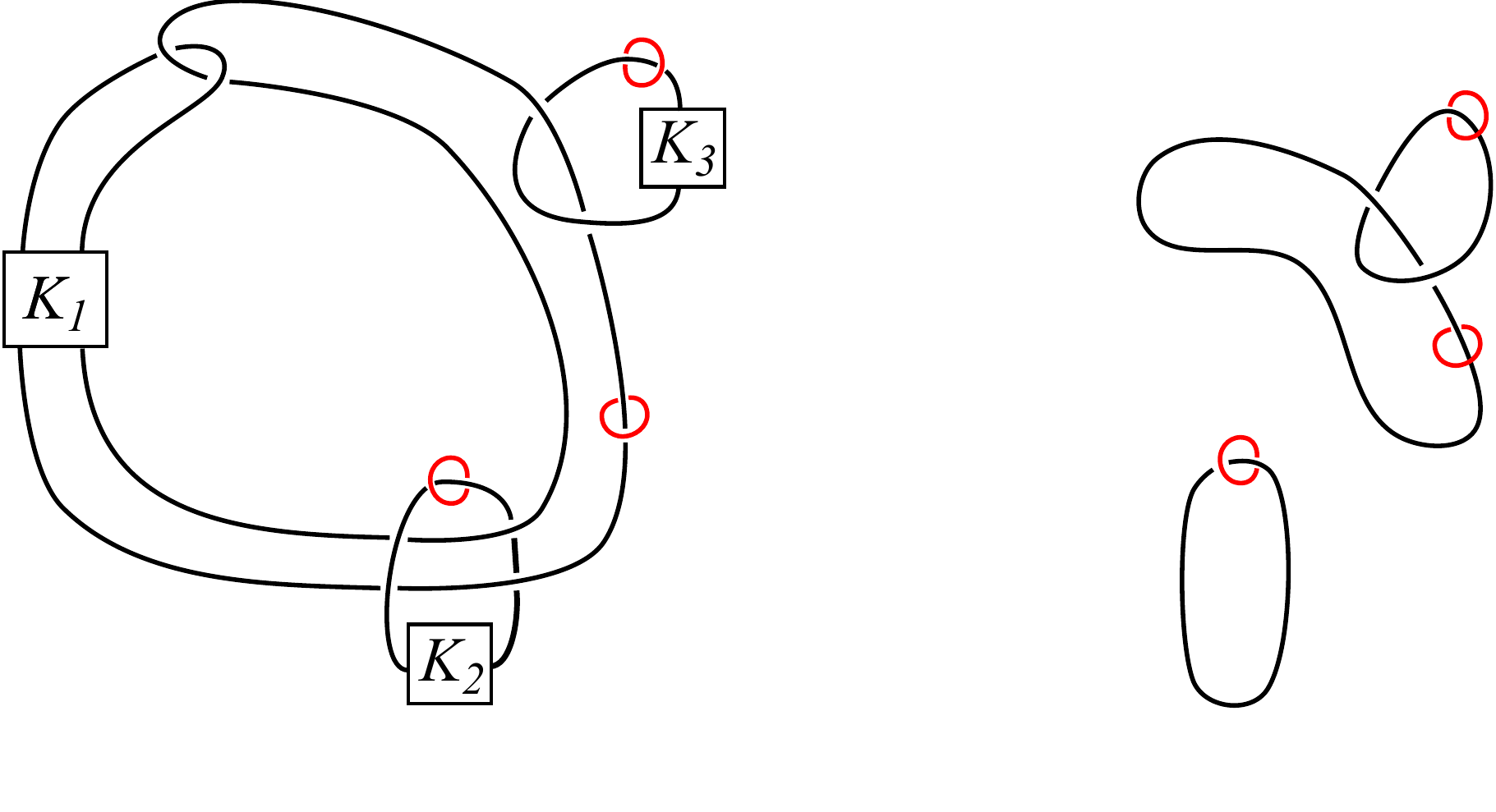}}
	\caption[Homotopic and shake concordant links.]{}
	\label{fig:HomotopicLinksShake}
	\end{center}
\end{figure}

We now show that there is an infinite family of 2-component links with unknotted components that are pairwise strongly shake concordant but not shake concordant.

Given a knot $K$, let $L(K)$ denote the 2-component link of Figure \ref{fig:ChaLink}. Note that each component of $L(K)$ is unknotted and $L(U)$ is the Hopf link. It is known that $\tau(K)$, the Ozsv\'ath-Szab\'o tau invariant for $K$ defined in \cite{os03}, can distinguish $L(K)$ from the Hopf link (Theorem 4.1 in \cite{ckrs}):

\begin{figure}[ht]
	\begin{center}
	\includegraphics[width=.2\textwidth]{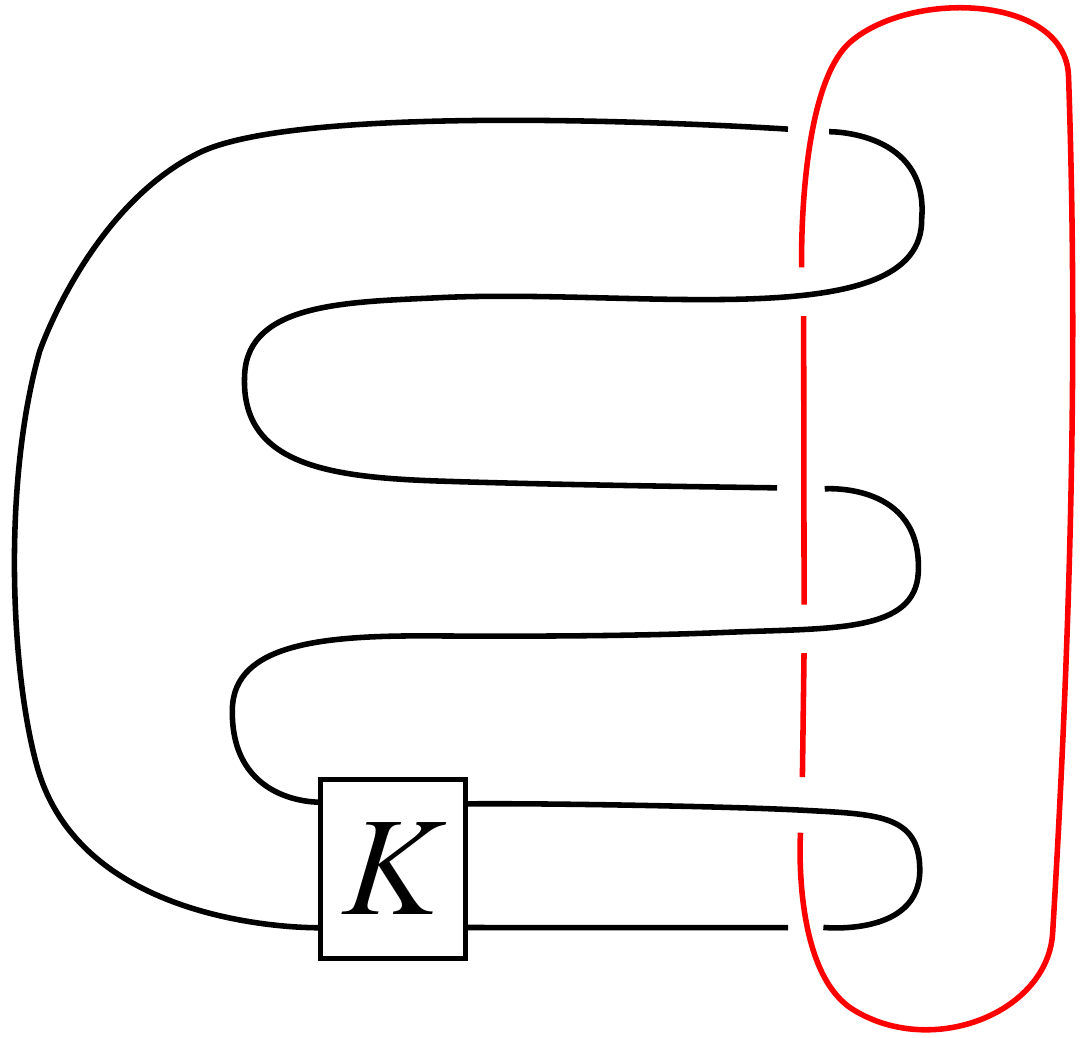}
	\caption{The link $L(K)$.}
	\label{fig:ChaLink}
	\end{center}
\end{figure}

\begin{thm}[Cha, Kim, Ruberman, Strle 2010]
\label{thm:cha}
		The 2-component link $L(K)$ is not concordant to the Hopf link for $K$ when $\tau(K)>0$. Moreover, there is a infinite family of knots $K_n$ such that $L(K_n)$ are distinct up to smooth concordance.
\end{thm}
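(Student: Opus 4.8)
Since Theorem~\ref{thm:cha} is due to Cha--Kim--Ruberman--Strle, what follows is the strategy I would use to prove it. The guiding principle is that one must use a \emph{smooth} invariant: the multivariable signature, the Milnor invariants, and the Blanchfield pairing of $L(K)$ all coincide with those of the Hopf link (indeed $L(K)$ is topologically concordant to the Hopf link), so no classical obstruction can see the difference. First I would record the structure of $L(K)$ visible in Figure~\ref{fig:ChaLink}: it is obtained from the Hopf link $H=H_1\cup H_2$ by infecting the first component along an unknotted curve $\eta\subset S^3\setminus H$ that bounds a disk meeting $H_1$ algebraically, but not geometrically, zero times and missing $H_2$, the infection knot being $K$. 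This is exactly why $\lk(L_1(K),L_2(K))=1$, why both components remain unknotted, and why $L(U)=H$.

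Next, suppose for contradiction that $L(K)$ is smoothly concordant to $H$ via a pair of annuli $C=C_1\cup C_2\subset S^3\times[0,1]$. The exterior of $C$ is a homology cobordism between the two link exteriors, so filling in $0$-framed solid tori along $C$ produces a smooth homology cobordism $Z$ from $Y_K:=S^3_0(L(K))$ to $Y_0:=S^3_0(H)$; both are integral homology spheres because the linking matrix $\left(\begin{smallmatrix}0&1\\1&0\end{smallmatrix}\right)$ is unimodular, and in fact $Y_0=S^3$. From the infection description, $Y_K$ is a surgery on a knot (or a splice) built explicitly from $K$. Now invoke the Ozsv\'ath--Szab\'o correction term $d$, which is an invariant of integral homology spheres up to $\mathbb{Z}$-homology cobordism: $d(Y_K)=d(Y_0)=0$. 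The heart of the argument is then a surgery-formula computation showing that $d(Y_K)$ is governed by the local invariant $V_0(K)$, so that $\tau(K)>0$ forces $d(Y_K)\neq 0$, a contradiction. An alternative packaging, which makes the role of $K$ more transparent, is to pass to the double branched cover of $(S^3\times[0,1],C_2)$: since $\lk(L_1,L_2)$ is odd the ends are copies of $S^3$, the cover is a homology cobordism, and $C_1$ lifts to an annulus between an unknot and the lift $\widetilde{L_1(K)}$, which (because $\eta$ links $L_2$ trivially, hence has two lifts swapped by the covering involution) is a winding-number-two satellite of $K$, morally $K\#K$; obstructing this annulus with $\tau$ (or $d$) via Hedden-type satellite formulas again reduces to $\tau(K)>0$.

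I expect this last computation -- pinning down $Y_K$ (or $\widetilde{L_1(K)}$) precisely and evaluating its smooth invariant in terms of $\tau(K)$, rather than obtaining only a one-sided bound -- to be the main obstacle, and it is the step in which the exact picture of Figure~\ref{fig:ChaLink} must be used carefully. It is also precisely the step that separates the smooth from the topological category, since in the topological world the corresponding homology cobordism carries no such obstruction.

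Finally, for the infinite family I would run the identical argument on a hypothetical concordance from $L(K)$ to $L(K')$ to conclude that the relevant smooth invariant ($d(Y_K)$, equivalently the value of $V_0$ or the sign and size of $\tau(K)$) is a concordance invariant of $L(K)$. Taking $K_n=\#^{n}T_{2,3}$, so that $\tau(K_n)=n$ by additivity of $\tau$ under connected sum, this invariant is bounded below by a function of $n$ that diverges, hence takes infinitely many distinct values; therefore the links $L(K_n)$ fall into infinitely many smooth concordance classes, all lying (by the topological-invariance remarks above) in the topological concordance class of the Hopf link.
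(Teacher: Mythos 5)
The paper does not prove this statement: Theorem~\ref{thm:cha} is imported verbatim from Cha--Kim--Ruberman--Strle (cited as Theorem~4.1 of \cite{ckrs}) and is used as a black box, so there is no proof in the paper against which to compare your sketch. Evaluating your outline on its own terms, your second ``packaging'' -- passing to the double branched cover of $(S^3\times[0,1],C_2)$, observing that since both components are unknotted and $\lk(L_1,L_2)$ is odd the ends are $S^3$'s, and obstructing the sliceness of the lifted knot $\widetilde{L_1(K)}$ with $\tau$ -- is in fact the covering-link strategy that \cite{ckrs} actually carry out, and your identification of the lift as a winding-number-two satellite of $K$ (morally $K\#K$) is the right picture. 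The first packaging via $d(S^3_0(L(K)))$ is a plausible alternative but would need a nontrivial surgery computation you do not supply. You also correctly flag the one step you leave open -- pinning down $\widetilde{L_1(K)}$ from Figure~\ref{fig:ChaLink} and computing $\tau$ of the resulting satellite -- and that is indeed the content of the CKRS argument, so what you have is an accurate strategic outline rather than a proof. One small caveat: the assertion that $L(K)$ is \emph{topologically} concordant to the Hopf link is itself a theorem of \cite{ckrs} (via Freedman and a homology-boundary-link argument), not something that follows automatically from agreement of classical invariants, so if you wanted a self-contained account you would need to cite or reprove that as well; for the purposes of obstructing \emph{smooth} concordance it is dispensable.
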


It then follows:

\begin{prop}
	\label{prop:strongbutnotconc}
	There exists an infinite family of 2-components links with trivial components that are all strongly shake concordant to the Hopf link, but none of which are concordant to the Hopf link.
\end{prop}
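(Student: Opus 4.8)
The plan is to take the family $\{L(K_n)\}$ supplied by Theorem~\ref{thm:cha}, with the knots $K_n$ chosen (as in the Cha--Kim--Ruberman--Strle construction) so that $\tau(K_n)>0$ for every $n$. Then no $L(K_n)$ is concordant to the Hopf link and the $L(K_n)$ are pairwise non-concordant, so the family is genuinely infinite; moreover each $L(K_n)$ has unknotted components, as required. Thus the entire content of the proposition reduces to a single claim, which I would isolate as a lemma: for every knot $K$, the link $L(K)$ is strongly shake concordant to the Hopf link $L(U)$.

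I would prove that lemma by constructing the strong shake concordance explicitly. First, read off from Figure~\ref{fig:ChaLink} that $L(K)$ is obtained from $L(U)$ by a modification supported in a ball meeting only one component of the link --- tying the knot $K$ into a band of that component --- and that this modification does not change linking numbers, so $\lk(L(K))=\lk(L(U))=\pm 1$. Since the link homotopy class of a two-component link is just its linking number, $L(K)$ is link homotopic to the Hopf link. Hence, after an ambient isotopy, $L(K)$ is carried to $L(U)$ by a finite sequence of self-crossing changes of the two components, each change occurring in a ball meeting only the component being altered; using that both components of $L(K)$ are unknotted, one arranges this sequence so that the altered component stays unknotted throughout (equivalently, one simply unties $K$ inside the supporting band by self-crossing changes there).

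Next I would realize such a family of self-crossing changes by a single strong shake concordance. This is the analogue of the construction in the proof of Proposition~\ref{prop:crossings}, except that a component is allowed to absorb parallel copies of \emph{itself} rather than copies of an auxiliary meridian; what makes this legitimate is precisely that the component is unknotted, so its parallel copies are unknots and can be banded together without introducing new knotting, just as the meridian copies were in Proposition~\ref{prop:crossings}. Concretely, to change one self-crossing of an unknotted component $L_i$, take a $3$-shaking of $L_i$ (two copies oriented coherently with $L_i$, one reversed), band $L_i$ to the reversed copy by a band running through the crossing region so as to flip the crossing, and band in the remaining coherent copy by a trivial band; the resulting surface is connected, genus zero, built entirely from parallel copies of $L_i$, and its component at the top is the crossing-changed unknot --- and because the $3$-shaking carries the homology class of a single copy, the linking number with the other component is preserved. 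Doing all of the required crossing changes at once (enlarging the shakings of the two components as needed, exactly as in the last paragraph of the proof of Proposition~\ref{prop:crossings}) produces disjoint genus-zero surfaces $F_1,F_2$ effecting a strong shake concordance from $L(K)$ to the Hopf link; the surface over a component not being altered at a given stage is just a product concordance, disjoint from the rest since the added bands are supported near the crossing balls and a $0$-framed parallel copy of a component is disjoint from the other component.

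The step I expect to be the main obstacle is not conceptual but a matter of making the geometry precise: showing that the difference between $L(K)$ and $L(U)$ can genuinely be resolved by self-crossing changes that are supported one component at a time and that keep that component unknotted --- this is exactly where the explicit form of $L(K)$ in Figure~\ref{fig:ChaLink} and the unknottedness of its components are used --- together with checking that the bands and parallel copies forming $F_1$ stay disjoint from $F_2$ and the linking-number bookkeeping that forces each reversed parallel copy to be balanced by a coherent one. One should also record in passing that strong shake concordance is transitive (stack the surfaces after matching up numbers of parallel copies), or else, as indicated above, avoid transitivity altogether by performing all the crossing changes within a single shaking.
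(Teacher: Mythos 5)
Your overall plan matches the paper's: apply Theorem~\ref{thm:cha} for the nonconcordance, and reduce everything to showing $L(K)$ is strongly shake concordant to the Hopf link. But the way you try to establish that single claim — via link homotopy — contains a fatal gap, and the paper does something quite different. The paper simply takes a $(1,1;3,1)$ strong shake concordance: a $(3,1)$-shaking of the Hopf link can be ambient-isotoped and then band-summed to produce $L(K)$ directly (Figure~\ref{fig:cha_link_hopf}). No detour through link homotopy is needed.

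The gap in your argument is the central claim that a self-crossing change of a component $L_i$ can be realized by a strong shake concordance using parallel copies of $L_i$ itself. This is false, and the analogy you draw with Proposition~\ref{prop:crossings} is misleading: there the crossing change on $K$ is realized by band-summing $K$ with parallel copies of the \emph{meridian} component, and it is crucial that a meridian links $K$ once and bounds a disk that $K$ pierces. A $0$-framed parallel copy of $L_i$ has linking number zero with $L_i$ and bounds no such disk, so a band to a parallel copy cannot flip a crossing of $L_i$; ``unknottedness'' of $L_i$ is a red herring here. Concretely, if your claim were true you could take the Whitehead link (both components unknotted, linking number $0$), which is link homotopic to the $2$-component unlink by a self-crossing change, and conclude it is strongly shake concordant to the unlink. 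But by Theorem~\ref{thm:firstmilnor} the first non-vanishing Milnor invariant is a shake concordance invariant, and the Whitehead link has $\overline{\mu}(1122)=\pm 1$ while the unlink has all Milnor invariants zero — a contradiction. (For knots the same objection appears even more starkly: every knot is homotopic to the unknot through self-crossing changes, yet a knot with nonzero signature, such as the trefoil, is not $0$-shake slice, since signatures obstruct \cite{akbulut77}.) More broadly, the reduction ``$L(K)$ is link homotopic to the Hopf link, hence differs from it by self-crossing changes, hence is strongly shake concordant to it'' cannot be correct in principle, because link homotopy discards exactly the kind of higher-order linking data (Sato--Levine, and in general the first non-vanishing $\overline{\mu}$) that Section~5 of the paper proves shake concordance must preserve. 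You should instead exhibit the strong shake concordance directly from the explicit diagram of $L(K)$, as the paper does, by shaking one component of the Hopf link three times and fusing the copies together to absorb the tied-in band of $K$.
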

\begin{proof}
We argue that $L(K)$ is strongly $(1,1;3,1)$ shake concordant to the Hopf link for any knot $K$. First take a $(3,1)$ shaking of the Hopf link as in Figure \ref{fig:cha_link_hopf} (A) and (B). Then note Figure \ref{fig:cha_link_hopf} (B) is ambient isotopic to Figure \ref{fig:cha_link_hopf} (C) wherein we may band sum the parallel copies to obtain Figure \ref{fig:cha_link_hopf} (D). This gives the desired strong shake concordance; however, we have by Theorem \ref{thm:cha} that $L(K)$ is not concordant to the Hopf link whenever $\tau(K)>0$.
\end{proof}

\begin{figure}[ht]
	\begin{center}
	\subfloat[]{\includegraphics[width=.1\textwidth]{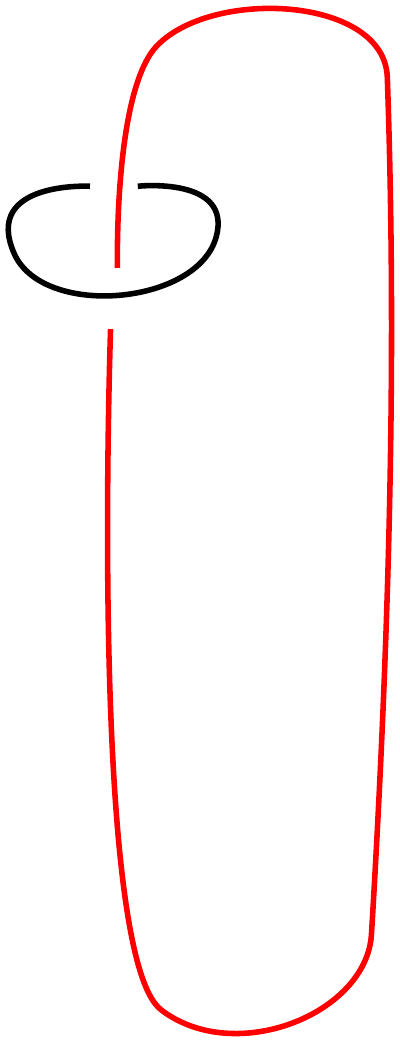}}
	\hspace{.05\textwidth}
	\subfloat[]{\includegraphics[width=.1\textwidth]{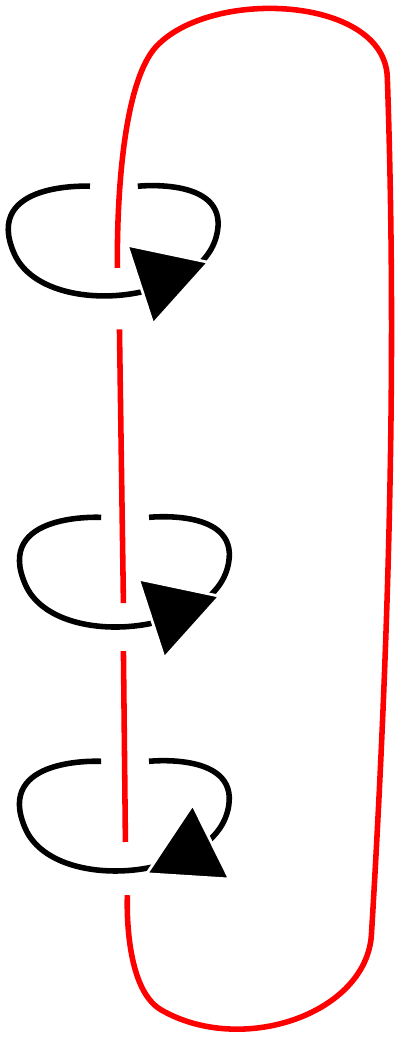}}
	\hspace{.05\textwidth}
	\subfloat[]{\includegraphics[width=.28\textwidth]{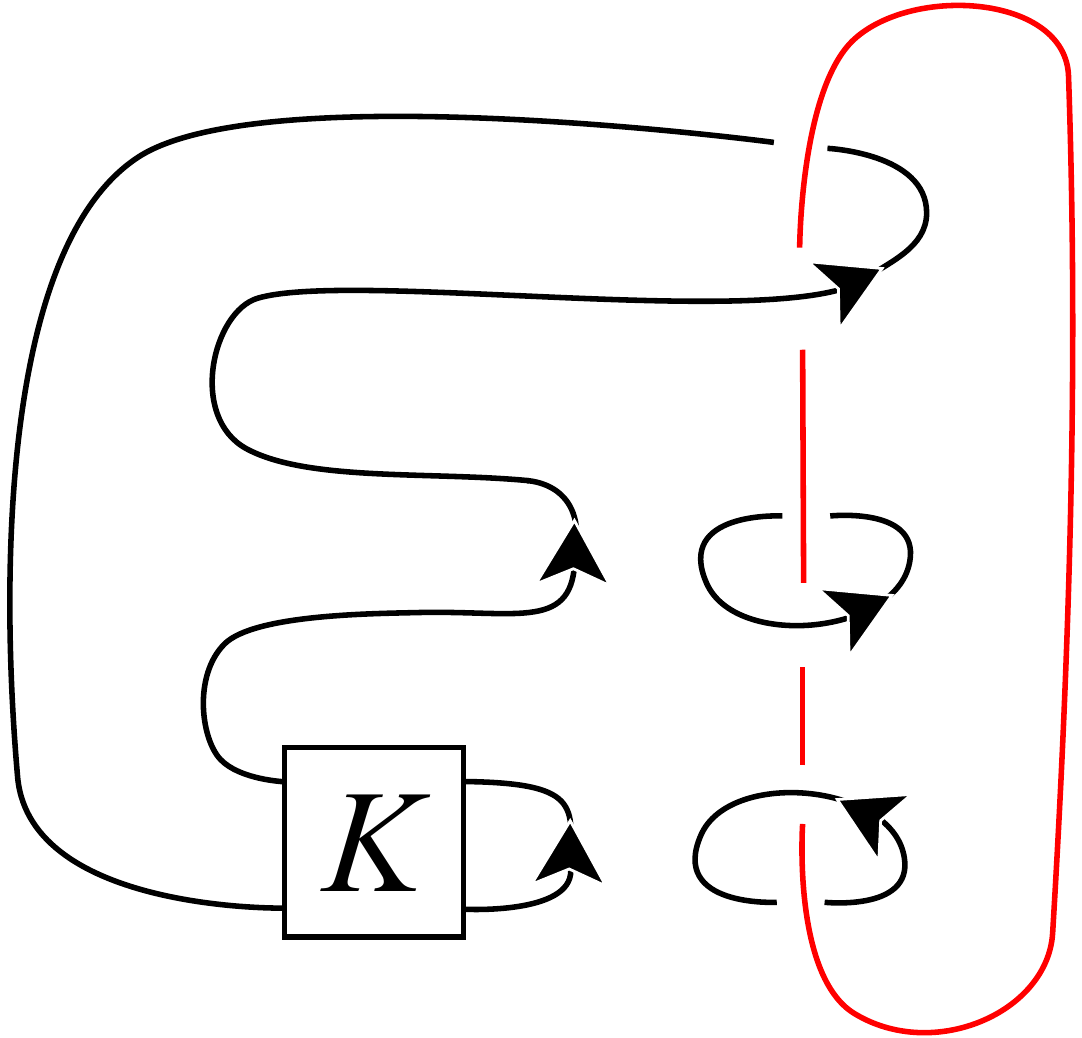}}
	\hspace{.05\textwidth}
	\subfloat[]{\includegraphics[width=.28\textwidth]{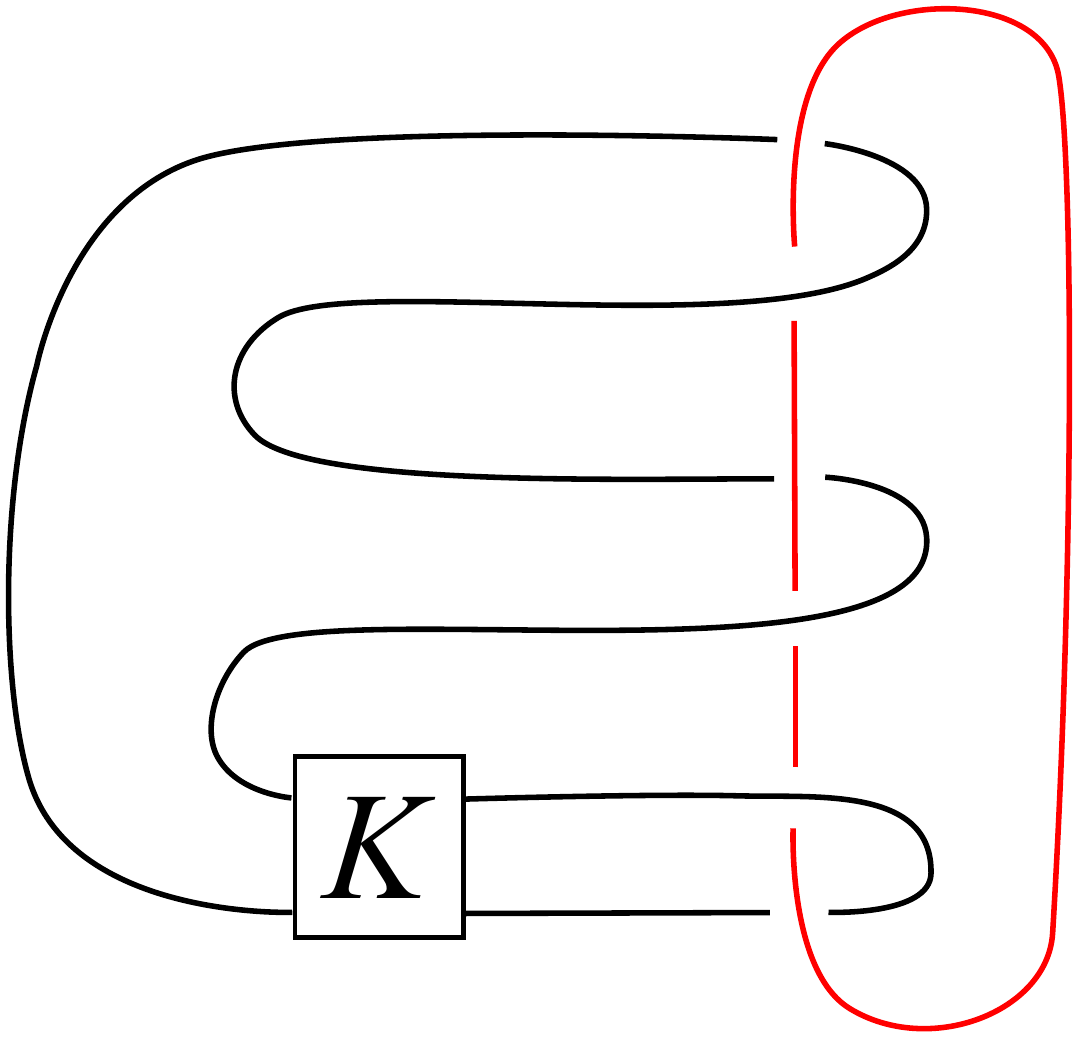}}
	\caption{Steps to obtain strong shake concordance between Hopf link and $L(K)$.}
	\label{fig:cha_link_hopf}
	\end{center}
\end{figure}

\section{Classification of Links Up of Shake Concordance}

In this section we characterize shake concordance of links in terms of concordance and string link infections, generalizing the known classification for shake concordance of knots in terms of satellite operators:

\begin{thm}[Cochran, Ray 2015]
	The knots $K$ and $J$ are shake concordant if and only if there exist winding number one patterns $P$ and $Q$, with $P$ and $Q$ ribbon knots, such that $P(K)$ is concordant to $Q(J)$.
\end{thm}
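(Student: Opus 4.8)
My plan is to prove the two implications by explicit surface constructions; one could alternatively deduce the statement from the classification proved below in this section (the $m=1$ case), modulo the dictionary that identifies, for a single component, a slice knot with infection data along a once-punctured meridian disk with a winding-number-one ribbon pattern, but I will give the direct argument since it models the general one. The organizing principle is that a $(2k{+}1)$-shaking of a knot $C$ is a generalized satellite --- the image of a fixed $(2k{+}1)$-component link (the shaking of the core) under an identification of a model solid torus $V$ with $\nu(C)$ --- and that the winding-number-one patterns that are ribbon knots are precisely those that can be ``unraveled'' into such a shaking of the companion across a genus-zero surface supported in $\nu(C)\times I$.

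For $(\Leftarrow)$, I would first establish the lemma that a winding-number-one ribbon pattern $P$ satisfies: $K$ is shake concordant to $P(K)$ for every $K$. Fix a presentation of $P$ as the result of fusing $2k{+}1$ oriented parallel copies of the core of $V$ by $2k$ bands (after stabilizing the number of copies if necessary, one can arrange the ribbon disk of $P$ to meet a meridian disk of $V$ in exactly this profile). Applying the same bands to $2k{+}1$ parallel copies of $K$ inside a collar $\nu(K)\times[0,\varepsilon]$ sweeps out a connected genus-zero surface with top boundary $P(K)$ and bottom boundary a $(2k{+}1)$-shaking of $K$. Performing this for $P$ at the bottom end and $Q$ at the top end and stacking the two pieces onto the given concordance from $P(K)$ to $Q(J)$ yields the desired genus-zero shake concordance; checking genus, orientations, and framings is the routine part here.

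For $(\Rightarrow)$, I would present a shake concordance as an embedded sphere $\Sigma$ in $W^0_{K,J}$ in the class $(1;1)$ and delete the parallel families of handle cores $\Sigma$ meets, exhibiting $\Sigma$ as a connected genus-zero surface $F\subset S^3\times[0,1]$ with boundary a $(2n{+}1)$-shaking of $K$ on $\partial\nu(K)\times\{0\}$ and a $(2m{+}1)$-shaking of $J$ on $\partial\nu(J)\times\{1\}$. In collars of the two ends I would glue on genus-zero surfaces fusing the $2n{+}1$ (resp. $2m{+}1$) parallel copies into single knots $P(K)$ and $Q(J)$, by bands modelled on how $F$ connects the corresponding boundary circles; since those bands, transported back to the standard solid torus, merge an unlink, the resulting patterns $P$ and $Q$ are automatically ribbon with winding number one. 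Stacking then produces a surface from $P(K)$ to $Q(J)$, which it remains to compress to an annulus, i.e.\ a concordance.

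The compression in $(\Rightarrow)$ is where I expect the real difficulty to lie: the stacked surface has genus $2n+2m$, coming from the oppositely oriented pairs of strands in the shakings, so one must exhibit enough non-separating compressing disks in the complement. The natural candidates are built from the product annuli on $\partial\nu(K)$ and $\partial\nu(J)$ that run between oppositely oriented strands, used as tubing data, and arranging this forces the bands defining $P$ and $Q$ to be chosen compatibly with $F$. A cleaner way to see the compression is probably to carry out the argument one dimension higher --- excising a neighborhood of $\Sigma$ from $W^0_{K,J}$ and regluing so that the ambient handle structure furnishes the compressing disks --- and then reinterpret the outcome as a concordance between $P(K)$ and $Q(J)$, verifying along the way that the patterns obtained are still winding-number-one ribbon knots.
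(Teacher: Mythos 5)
This is a theorem of Cochran and Ray that the paper quotes without proof, so the right benchmark for your argument is the paper's proof of the link-level generalization (Lemma~\ref{lemma:infectedshake} through Theorem~\ref{thm:classification}), which is modelled on Cochran--Ray's. Your $(\Leftarrow)$ direction hinges on the unproven assertion that a winding-number-one ribbon pattern $P\subset V$ can be written as a fusion of $2k+1$ parallel copies of the core of $V$ by $2k$ bands lying in $V$. The parenthetical about isotoping the ribbon disk so it ``meets a meridian disk of $V$'' in a prescribed profile does not supply this: the ribbon disk lives in $B^4$, so it meets a meridian disk of $V\subset S^3$ only along $\partial\Delta=P$, and that intersection pattern is forced by the winding number alone, with no ribbon input. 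Whether every such $P$ admits the fusion presentation you posit is itself a nontrivial (and in this form suspicious) claim. The argument the paper actually uses, in Lemma~\ref{lemma:infectedshake}, is much lighter and sidesteps this: write the slice knot $L=P$ as a fusion of $L$ \emph{together with} a cancelling $(2n{+}1)$-shaking of the core, giving a genus-zero surface from $L$ to $L\sqcup sh(\mathrm{core})$ supported away from the infection region; infecting converts this to a surface from $P(K)$ to $L\sqcup sh(K)$, and capping off the slice $L$ yields the shake concordance. No decomposition of $P$ into core parallels is required.

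Your $(\Rightarrow)$ direction also has a real gap, which you acknowledge: after fusing the shaking boundary circles at each end into $P(K)$ and $Q(J)$ you have a genus $2n+2m$ surface, and the proposed compression (tubing via product annuli, or ``going one dimension higher'') is only gestured at. The standard argument avoids compression altogether. Isotope the genus-zero shake-concordance surface so that projection to $[0,1]$ is Morse with all minima, then all join saddles, then all split saddles, then all maxima, in that order (as in the proof of Lemma~\ref{lemma:shakeimplies}). The mid-level slice is then a single circle $M$, and an Euler-characteristic count shows both halves are connected and genus zero, so each half is already as simple as it can be. The bottom half, read downward, shows $M$ is a fusion of the $(2n{+}1)$-shaking of $K$ with a trivial link; after a bit of care to push the fusion data and the trivial link into a solid torus neighborhood of $K$ (this is where the paper's infection by $J'\#\,{-}J'$ in Lemma~\ref{lemma:shakeimplies} is needed when the bands cannot be isotoped off the infection region), this exhibits $M$ as concordant to $P(K)$ for a ribbon, winding-number-one $P$. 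The top half gives $M$ concordant to $Q(J)$ likewise, and stacking the two concordances finishes. So rather than trying to compress your stacked surface, you should normalize the Morse function and cut at the mid-level; that is the step your sketch is missing.
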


Let $I(L,J,\mathbb{E}_\varphi)$ denote the multi-infection of a $m$-component link $L$ by $k$ component string link $J$ along the image of the embedding of a multidisk $\varphi: \mathbb{E}\rightarrow S^3$ where disk $\mathbb{E}$ has $k$ subdisks $D_1,...,D_k$. Denote the image of the embedding by $\mathbb{E}_\varphi$, then the multi-infection takes place in the thickened disk $\mathbb{E}_\varphi\times[0,1]$. Effectively, this ties $L$ into $J$ along $\mathbb{E}_\varphi\times[0,1]$ resulting in the infected link $I(L,J,\mathbb{E}_\varphi)\subset S^3$. See \cite[Section 2.2]{cft} for a formal definition.

We say $\mathbb{E}_\varphi$ {\it respects} $L$ if $k=m$ and each link component $L_i$ intersects the subdisk $D_j$ algebraically once when $i=j$ and algebraically zero times when $i\neq j$. If, moreover, the count of intersection points is geometrically zero wherever $i\neq j$, then we say $\mathbb{E}_\varphi$ {\it strongly respects} $L$.

For the characterization theorem, we will need the following lemma.
\begin{lem}
Given slice $m$-component link $L$, string link $J$, and an embedded multidisk $\mathbb{E}_\varphi$ that respects $L$. Let $n_i$ denote the number of times that $L$ geometrically intersects the subdisk $D_i$ of $\mathbb{E_\varphi}$, for $i=1,...,m.$ Then we have that $I(L,J,\mathbb{E}_\varphi)$ is $(1,...,1;n_1,...,n_m)$ shake concordant to $\widehat{J}$, the closure of $J$.
\label{lemma:infectedshake}
\end{lem}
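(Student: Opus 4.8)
The plan is to build the required surfaces $F_1,\dots,F_m$ in $S^3\times[0,1]$ directly from the slice disks for $L$, using the string link infection to interpolate between $\widehat{J}$ at the top and $I(L,J,\mathbb{E}_\varphi)$ at the bottom. First I would set up the picture: place $I(L,J,\mathbb{E}_\varphi)$ in $S^3\times\{0\}$, and note that since $\mathbb{E}_\varphi$ respects $L$, the $i$-th subdisk $D_i$ is met algebraically once by $L_i$ and algebraically zero times by $L_j$ for $j\neq i$, with $n_i$ the geometric intersection count with $D_i$. The key idea is that infection along a multidisk can be realized by a \emph{cut-and-reglue} along $D_i\times[0,1]$: cutting $I(L,J,\mathbb{E}_\varphi)$ along each $D_i$ and regluing via the identity produces $L$ together with a local copy of $\widehat J$ sitting in a ball disjoint from $L$; equivalently, the trace of this operation is a cobordism in $S^3\times[0,1]$.

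The main construction: I would describe a sequence of ambient moves (saddles/band moves) in $S^3\times[0,1]$ that carries $I(L,J,\mathbb{E}_\varphi)\subset S^3\times\{0\}$ to a link sitting just above as $L \sqcup (\text{copies of }\widehat J)$, where the $n_i$ strands of $L$ passing through $D_i$ get absorbed into the shaking data. Concretely, near each subdisk $D_i$ the $n_i$ parallel strands of $L$ running through it — of which $L_i$ contributes net $+1$ and the rest cancel in pairs — are exactly the data of ``a $(1)$-shaking of $L_i$ plus pairs of oppositely oriented parallel copies of the $L_j$'s.'' So at $S^3\times\{0\}$ the surface $F_i$ should meet the slice, after these moves, in precisely such a shaking. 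Then, for $\varepsilon \le t \le 1$, I would take $F_i$ to be (the trace of) a slice disk for $L_i$ pushed into $S^3\times[\varepsilon,1]$, capped off against $L_i'=L_i$ in $S^3\times\{1\}$ — here using that $L$ is slice — together with caps for the cancelling parallel pairs (each oppositely-oriented pair of parallel copies of a component bounds an embedded annulus, hence contributes nothing to the genus and keeps $F_i$ connected of genus zero after tubing into the main disk). The local copies of $\widehat J$ produced by the cut-and-reglue are exactly what remains, i.e. the $\widehat J$ at $S^3\times\{1\}$ — so after reorganizing which sheets belong to which $F_i$, the surface $F_i$ has $F_i\cap(S^3\times\{0\})$ an $(n_i)$-shaking type configuration of $L$ and $F_i\cap(S^3\times\{1\})$ a single copy of $\widehat J_i$, as required for $(1,\dots,1;n_1,\dots,n_m)$ shake concordance.

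I expect the genuine obstacle to be bookkeeping rather than topology: verifying that the surfaces one obtains are (i) \emph{disjoint}, (ii) \emph{connected}, and (iii) of \emph{genus zero}, simultaneously. Disjointness follows because the slice disks for distinct $L_i$ can be taken disjoint and the infection is supported in disjoint thickened subdisks $D_i\times[0,1]$; connectedness and genus zero require that every band/saddle move used to merge the cancelling parallel pairs and to glue in the $\widehat J$-sheets is done so that $F_i$ stays a single planar piece — this is where I would be careful, possibly choosing the bands along the boundary $\partial D_i$ in the order dictated by the intersection points of $L$ with $D_i$, so each successive band is a trivial (index-one) move on a connected planar surface. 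A secondary point to check is the framing/normal-direction data: the parallel copies in a shaking are $r$-framed (here $r=0$), and one must confirm the pushed-in slice disk and the annuli capping the cancelling pairs are compatible with the $0$-framing, which is automatic since all our parallel copies are $0$-framed pushoffs of embedded curves bounding embedded surfaces. Once these three properties are in hand, the definition of $(1,\dots,1;n_1,\dots,n_m)$ shake concordance is met and the lemma follows.
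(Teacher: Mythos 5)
Your plan is essentially the same as the paper's: observe that $L$ is obtained by banding a copy of $L$ to a shaking of the $m$-component unlink supported in the thickened subdisks, push this band cobordism (and the infection by $J$) through $S^3\times[0,1]$ so that $I(L,J,\mathbb{E}_\varphi)$ at one end becomes $L$ split from a shaking of $\widehat J$ at the other, and then cap off the $L$ part with slice disks. That is the right idea and it is what the paper does.

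However, the boundary conditions you end up asserting are wrong and contradict the statement of the lemma. You claim $F_i\cap(S^3\times\{0\})$ is ``an $(n_i)$-shaking type configuration of $L$'' and $F_i\cap(S^3\times\{1\})$ is ``a single copy of $\widehat J_i$.'' But you placed $I(L,J,\mathbb{E}_\varphi)$ in $S^3\times\{0\}$, so $F_i\cap(S^3\times\{0\})$ must be a \emph{single} copy of $I(L,J,\mathbb{E}_\varphi)_i$ (the $1$'s in $(1,\dots,1;n_1,\dots,n_m)$), and $F_i\cap(S^3\times\{1\})$ must be the $n_i$-strand shaking of $\widehat J$. Related to this, the step where you ``cap for the cancelling parallel pairs'' with embedded annuli and tube them into $F_i$ is exactly what you should \emph{not} do: capping those pairs off removes them from $S^3\times\{1\}$, and if every oppositely-oriented pair is absorbed into the interior of $F_i$ then what remains at $S^3\times\{1\}$ is a single copy of $\widehat J_i$, i.e., you would have produced a concordance from $I(L,J,\mathbb{E}_\varphi)$ to $\widehat J$ — far stronger than the lemma and false in general. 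The cancelling parallel copies of $\widehat J$ are supposed to \emph{remain} as boundary components of $F_i$ at $S^3\times\{1\}$; they are the shaking data $n_1,\dots,n_m$ in the conclusion. Finally, ``cutting along $D_i$ and regluing via the identity'' does nothing as stated; what you want is the band decomposition of $L$ into a copy of $L$ disjoint from the infection region plus a shaking of the unlink lying in $\bigsqcup_i D_i\times[0,1]$, and then apply the multi-infection fiberwise in $S^3\times[0,1]$ to that cobordism.

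So: right skeleton, but the endpoint bookkeeping needs to be corrected — leave the cancelling pairs on the boundary, do not cap them, and swap which end gets the shaking.
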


\begin{proof} Note that $L$ can be obtained by band summing a copy of $L$ with a $(n_1,...,n_m)$ shaking of the $m$-component unlink, which we will denote $T_m$, as in Figure \ref{fig:lemma}.

\begin{figure}[ht]
	\begin{center}
	\includegraphics[width=.4\textwidth]{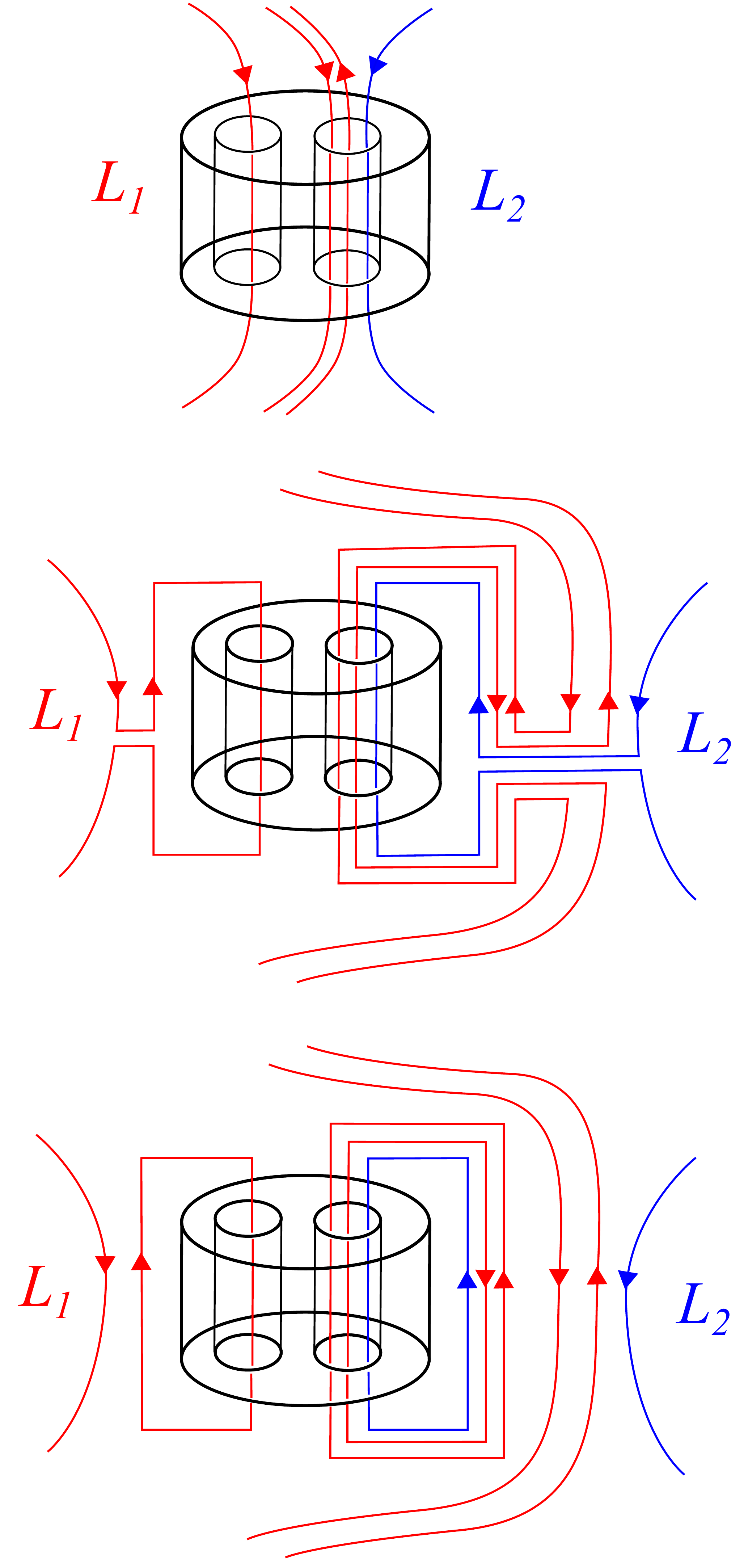}
	\caption{Effect of bands on fusing components.}
	\label{fig:lemma}
	\end{center}
\end{figure}

Hence there exists a smooth, compact, connected, genus zero surface $S\subset S^3\times[0,1]$ that cobounds $L\subset S^3\times\{0\}$ and the disjoint union $L\sqcup T_m\subset S^3\times\{1\}$.

We may construct $S$ such that it lies entirely in the complement of \[\left((\mathbb{E}_\varphi/\sqcup_i\varphi(D_i))\times[0,1]\right)\times[0,1]\subset S^3\times[0,1].\] Thus, we may multi-infect the link at each $t\in [0,1]$ along $\left(\mathbb{E}_\varphi\times[0,1]\right)\times\{t\}$ by the string link $J$, thus modifying the surface $S$ so that it cobounds $I(L,J,\mathbb{E}_\varphi)\subset S^3\times\{0\}$ and a disjoint union of $L$ and a $(n_1,...,n_m)$ shaking of $\widehat{J}$ in $S^3\times\{1\}$. As $L$ is slice, we can cap the components of $L$ off to obtain a $(1,...,1;n_1,...,n_m)$ shake concordance between $I(I,J,\mathbb{E}_\varphi)$ and $\widehat{J}$.
\end{proof}

\begin{cor}
Consider $m$-component slice links $L$ and $L'$, $m$-component string links $J$ and $J'$, and embeddings of multidisks $\mathbb{E}_\varphi$ respecting $L$ and $\mathbb{E'}_{\varphi'}$ respecting $L'$. If $I(L,J,\mathbb{E}_\varphi)$ is concordant to $I(L',J',\mathbb{E'}_{\varphi'})$, then $\widehat{J}$ is shake concordant to $\widehat{J'}$.
\label{cor:concthenshake}
\end{cor}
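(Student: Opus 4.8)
The plan is to build the desired shake concordance between $\widehat{J}$ and $\widehat{J'}$ by stacking three cobordisms: the two shake concordances produced by Lemma~\ref{lemma:infectedshake}, with the hypothesized concordance sandwiched between them. Concretely, Lemma~\ref{lemma:infectedshake} gives a $(1,\dots,1;n_1,\dots,n_m)$ shake concordance from $I(L,J,\mathbb{E}_\varphi)$ to $\widehat{J}$ (with $n_i$ the geometric intersection number of $L$ with the subdisk $D_i$) and, applied to $L'$, a $(1,\dots,1;n_1',\dots,n_m')$ shake concordance from $I(L',J',\mathbb{E}'_{\varphi'})$ to $\widehat{J'}$. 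Turning the first of these upside down in $S^3\times[0,1]$ yields a shake concordance $V$ from $\widehat{J}$ to $I(L,J,\mathbb{E}_\varphi)$; write $V'$ for the second shake concordance, from $I(L',J',\mathbb{E}'_{\varphi'})$ to $\widehat{J'}$. Finally, the hypothesized concordance $C$ from $I(L,J,\mathbb{E}_\varphi)$ to $I(L',J',\mathbb{E}'_{\varphi'})$ is tautologically a $(1,\dots,1;1,\dots,1)$ shake concordance, being $m$ disjoint annuli each meeting the two ends in a single copy of the relevant component.

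I would then stack $V$, $C$, $V'$ in $S^3\times[0,3]$ and rescale to $S^3\times[0,1]$, letting $\mathcal{F}_i$ be the union of the $i$th surface from $V$, from $C$, and from $V'$, glued along the boundary circles lying over the two interior levels. The key feature that makes this legitimate is that those two gluing loci are an honest copy of $I(L,J,\mathbb{E}_\varphi)$ and of $I(L',J',\mathbb{E}'_{\varphi'})$: each component appears there exactly once (this is what the ``$(1,\dots,1)$'' in Lemma~\ref{lemma:infectedshake} records), so for each $i$ the $C$-piece is attached to the $V$-piece along a single circle and to the $V'$-piece along a single circle, with orientations matching. After rounding corners along the interior levels, the $\mathcal{F}_i$ are disjoint, smooth, properly embedded; each is connected, being a chain of connected surfaces glued along nonempty curves; and each has genus zero, since gluing two connected planar surfaces along a single common boundary circle gives a connected planar surface. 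As $\mathcal{F}_i\cap(S^3\times\{0\})$ is the shaking of $\widehat{J}_i$ (together with pairs of oppositely oriented parallel copies) coming from the $\widehat{J}$ end of $V$, and $\mathcal{F}_i\cap(S^3\times\{1\})$ is the shaking of $\widehat{J'}_i$ coming from the $\widehat{J'}$ end of $V'$, the surfaces $\mathcal{F}_1,\dots,\mathcal{F}_m$ form a shake concordance from $\widehat{J}$ to $\widehat{J'}$, as required.

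The one place I would be careful is the genus zero condition under stacking: gluing connected genus zero surfaces along $k$ boundary circles produces a surface of genus $k-1$ (a short Euler characteristic count), so the argument would collapse if either intermediate link occurred with higher multiplicity. It survives precisely because Lemma~\ref{lemma:infectedshake} delivers shake concordances whose $I(L,\cdot)$ and $I(L',\cdot)$ ends are multiplicity one, forcing every internal gluing to be along a single circle. The remaining routine check is that the orientation induced on each such circle by the shaking convention on one side agrees with that induced by the concordance on the other, so that the three pieces assemble into oriented surfaces.
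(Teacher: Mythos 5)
Your proposal is correct and takes essentially the same route as the paper: apply Lemma~\ref{lemma:infectedshake} twice, observe that a concordance is a trivial shake concordance, and stack. The paper states this in two sentences; your write-up helpfully makes explicit the point that keeps the stacking honest, namely that both interior gluing levels occur with multiplicity one so the glued surfaces remain connected and genus zero.
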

\begin{proof}
If $I(L,J,\mathbb{E}_\varphi)$ is concordant to $I(L',J',\mathbb{E'}_{\varphi'})$ then by Lemma \ref{lemma:infectedshake} $I(L,J,\mathbb{E}_\varphi)$ is $(1,...,1;n_1,...,n_m)$ shake concordant to $\widehat{J}$ and $I(L',J',\mathbb{E'}_{\varphi'})$ is $(1,...,1;n'_1,...,n'_m)$ shake concordant to $\widehat{J'}$. Hence, $\widehat{J}$ is $(n_1,...,n_m;n'_1,...,n'_m)$ shake concordant to $\widehat{J'}$.
\end{proof}

The following lemma serves as a converse to Lemma \ref{lemma:infectedshake}.
\begin{lem}
If $L$ is $(1,...,1; n_1,...,n_m)$ shake concordant to $L'$, then $L$ is concordant to $I(S,J',\mathbb{E}_\varphi)$ for some slice link $S$, string link $J'$ with closure $\widehat{J'}=L'$, and embedded multidisk $\mathbb{E}_\varphi$ that respects $S$.
\label{lemma:shakeimplies}
\end{lem}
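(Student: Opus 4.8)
The plan is to reverse the construction in Lemma~\ref{lemma:infectedshake}: starting from a $(1,\dots,1;n_1,\dots,n_m)$ shake concordance $F_1\sqcup\cdots\sqcup F_m$ in $S^3\times[0,1]$ between $L$ (at level $0$, appearing with a genuine $1$-shaking, i.e.\ one copy each) and $L'$ (at level $1$, appearing with an $(n_1,\dots,n_m)$-shaking), I want to extract a slice link $S$, a string link $J'$, and a respecting multidisk so that $L$ is concordant to $I(S,J',\mathbb{E}_\varphi)$. First I would turn the picture upside down: read the cobordism from $S^3\times\{1\}$ to $S^3\times\{0\}$, so that the genus-zero surfaces $F_i$ now start at the shaking of $L'$ and end at a single copy of each $L_i$. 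Decomposing each genus-zero surface into elementary births, deaths, and saddle moves (a Morse-theoretic handle decomposition of the cobordism), and using that each $F_i$ connects a $(2n_{ii}+1)$-component shaking to a single circle, the net effect is that pairs of oppositely oriented parallel strands of $L'_i$ get band-summed together and eventually capped off by disks.

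The key step is to localize \emph{where} these bands and caps occur. I would isolate small balls (or, after thickening, a product region) around the extra parallel copies and the bands joining them; collecting these data across all components gives an embedded multidisk $\mathbb{E}_\varphi$ whose subdisk $D_i$ is a Seifert-type disk meeting the $i$-th strand of the final single copy once and the other strands zero times algebraically --- precisely the ``respects'' condition. The link $S$ is then defined as the result of capping off everything inside $\mathbb{E}_\varphi\times[0,1]$ by the disks coming from the caps of the $F_i$'s, i.e.\ $S$ is the link obtained from $L'$'s shaking by trivializing the infection region; since each $F_i$ has genus zero and connects a shaking to a single circle, the capped-off object bounds disjoint disks in $B^4$ built from the (now closed) pieces of the $F_i$, so $S$ is slice. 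The string link $J'$ is read off as the ``difference'': the part of the original shaking of $L'$ that is threaded through the infection region, cut open along $\mathbb{E}_\varphi$, has closure $\widehat{J'}=L'$ by construction. Re-inserting $J'$ into the infection region of $S$ reproduces the shaking of $L'$ sitting at level $1$, and then the remaining portion of the cobordism $F_1\sqcup\cdots\sqcup F_m$ (with the infection region removed) is exactly a genuine concordance from $L$ to $I(S,J',\mathbb{E}_\varphi)$.

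I expect the main obstacle to be the bookkeeping that makes $\mathbb{E}_\varphi$ \emph{actually respect} $S$: one must check that after capping, the $i$-th subdisk meets $S_i$ algebraically once and the other $S_j$ algebraically zero times, and that the intersection pattern is exactly the one produced by a winding-number-one string link infection. This amounts to verifying that the bands used in the shake concordance, when organized into a multidisk, carry the correct algebraic intersection numbers --- the algebraic count ``$1$'' for the $i$-$i$ interaction is forced because $F_i$ represents the $(1,1)$-class (hence has the right homological framing/winding number), and the ``$0$'' counts for $i\neq j$ come from the oppositely oriented parallel copies cancelling. A secondary technical point is ensuring that the caps used to build $S$ can be chosen disjoint and that they genuinely bound in $B^4$; this follows because each $F_i$ is connected of genus zero, so removing the infection tube leaves a disk, but making the disjointness across different $i$ precise requires pushing the surfaces apart using that the original $F_i$ were disjoint. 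Once these intersection-theoretic and disjointness checks are in place, the rest is a routine cut-and-reglue argument in $S^3\times[0,1]$.
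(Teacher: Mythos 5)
Your outline captures the easy case of the paper's argument but omits the hard one. Like the paper, you read off the cobordism via a Morse decomposition, take a mid-level link, and try to express it as a string-link infection $I(S,J',\mathbb{E}_\varphi)$ where $S$ is slice. The paper passes to the level-$\{1/2\}$ link $M$, notes $L$ is concordant to $M$, and writes $M$ as a fusion of a shaking of $L'$ with a trivial link $T$; then $sh(L')=I(T_N,J',\mathbb{E}_\varphi)$ where $T_N$ is a shaking of the $m$-component unlink. If the fusion bands of $M$ can be isotoped off $\mathbb{E}_\varphi\times[0,1]$, then $M=I(T',J',\mathbb{E}_\varphi)$ with $T'$ a fusion of the trivial links $T_N$ and $T$, hence ribbon and so slice; this is essentially the picture you describe.

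The genuine gap is that you never confront the case where the fusion bands \emph{cannot} be pushed off the infection region $\mathbb{E}_\varphi\times[0,1]$, and this is precisely where the work is. When a band threads through the multidisk, ``trivializing the infection region'' does not produce a fusion of unknots, and your claim that the capped-off object $S$ ``bounds disjoint disks in $B^4$ built from the (now closed) pieces of the $F_i$'' has no justification: the $F_i$ live in $S^3\times[0,1]$ with nontrivial boundary, and there is no obvious way to close them into slice disks for your candidate $S$. The paper resolves this by introducing a second, disjoint multidisk $\mathbb{E}'_{\varphi'}$ isolating the $n_i$ parallel strands, infecting $M$ there by the \emph{slice} string link $J'\#-J'$ (leaving $M$ unchanged up to concordance), and then splitting that infection into a $-J'$ piece (absorbed into the definition of $S$, making $S$ a fusion of $T$ with an infection of $T_N$ by the slice string link $J'\#-J'$, hence slice) and a $J'$ piece (which becomes the infection $I(S,J',\cdot)$ you need). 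Without this cancellation trick, or an equivalent mechanism for dealing with bands running through the infection region, the sliceness of $S$ and the identification $M=I(S,J',\mathbb{E}_\varphi)$ are unproven. Your secondary worry about disjointness of caps is not the real obstruction; the band/multidisk interaction is.
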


\begin{proof}
Since $L$ is $(1,...,1; n_1,...,n_m)$ shake concordant to $L'$, there exist $m$ disjoint genus zero surfaces in $S^3\times [0,1]$, which we will denote $F_1,...,F_m$, with boundary $L\subset S^3\times \{0\}$ and $sh(L')\subset S^3\times \{1\}$ where $sh(L')$ denotes a $(n_1,...,n_m)$ shaking of $L'$.

We can move each $F_i$ via ambient isotopy such that the projection map $S^3\times [0,1]\rightarrow [0,1]$ is a Morse function when restricted to each $F_i$ and such that all local maxima occur at level $\{\frac{4}{5}\}$, split saddles at level $\{\frac{3}{5}\}$, join saddles at level $\{\frac{2}{5}\}$, and local minima at level $\{\frac{1}{5}\}$. Hence, the level $\{\frac{1}{2}\}$ of each surface $F_i$ is a connected component $M_i$ of some $m$-component link $M$. Notice, $L$ is concordant to $M$. See Figure \ref{fig:morse_map}.

\begin{figure}[ht]
	\begin{center}
	\includegraphics[width=.5\textwidth]{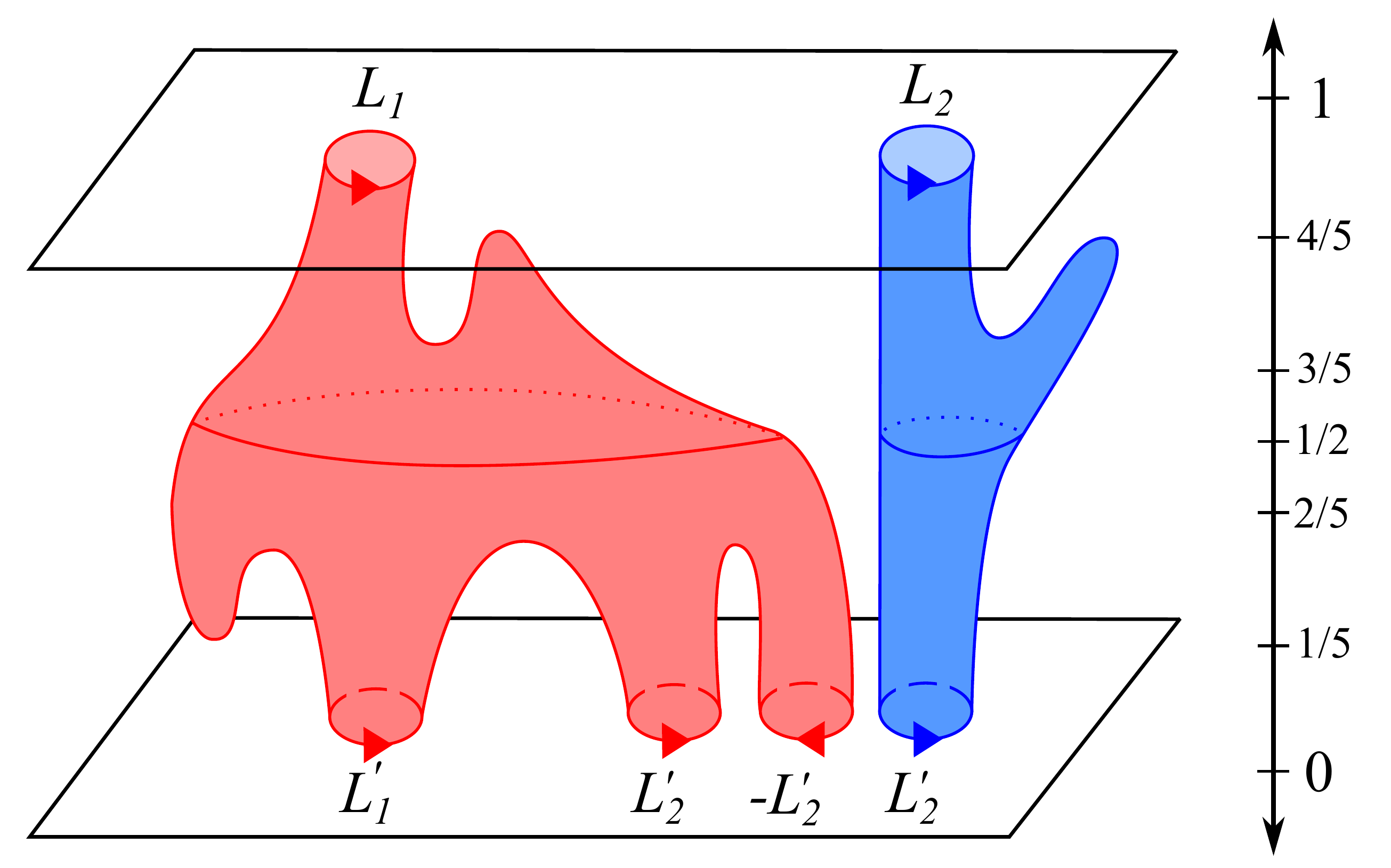}
	\caption{Morse function on the shake concordance.}
	\label{fig:morse_map}
	\end{center}
\end{figure}

Moreover, $M$ is a fusion of $sh(L')$ and a trivial link $T$, where each component of $T$ corresponds to a local minima of some $F_i$. That is, $M$ is obtained by fusing distinct components of $sh(L')$ and $T$ via band sum to obtain an $m$-component link.

Let $T_N$ denote a $(n_1,...,n_m)$ shaking of a $m$-component trivial link $T_m$, which is itself a trivial link of $N=\Sigma_i n_i$ components. Consider $\mathbb{E}_\varphi$ such that the $n_i$ components of $T_N$ that are parallel copies of the $i^{th}$ component of $T_m$ intersect subdisk $D_j$ of $\mathbb{E}_i$ algebraically $\delta_{ij}$ times. Then $\mathbb{E}_\varphi$ strongly respects $T_N$ and $sh(L')=I_r(T_N,J',\mathbb{E}_\varphi)$ for string link $J'$ with closure $\widehat{J'}=L'$.

If via ambient isotopy we can make the fusion bands avoid $\mathbb{E}_\varphi\times[0,1]$, then we may express $M=I(T',J',\mathbb{E}_\varphi)$ where $T'$ is a link obtained by fusing $T_N$ and $T$; see Figure \ref{fig:morse2}. As $T'$ is then slice and $L$ is concordant to $M=I(T',J',\mathbb{E}_\varphi)$, we are finished.

\begin{figure}[ht]
	\begin{center}
	\includegraphics[width=.4\textwidth]{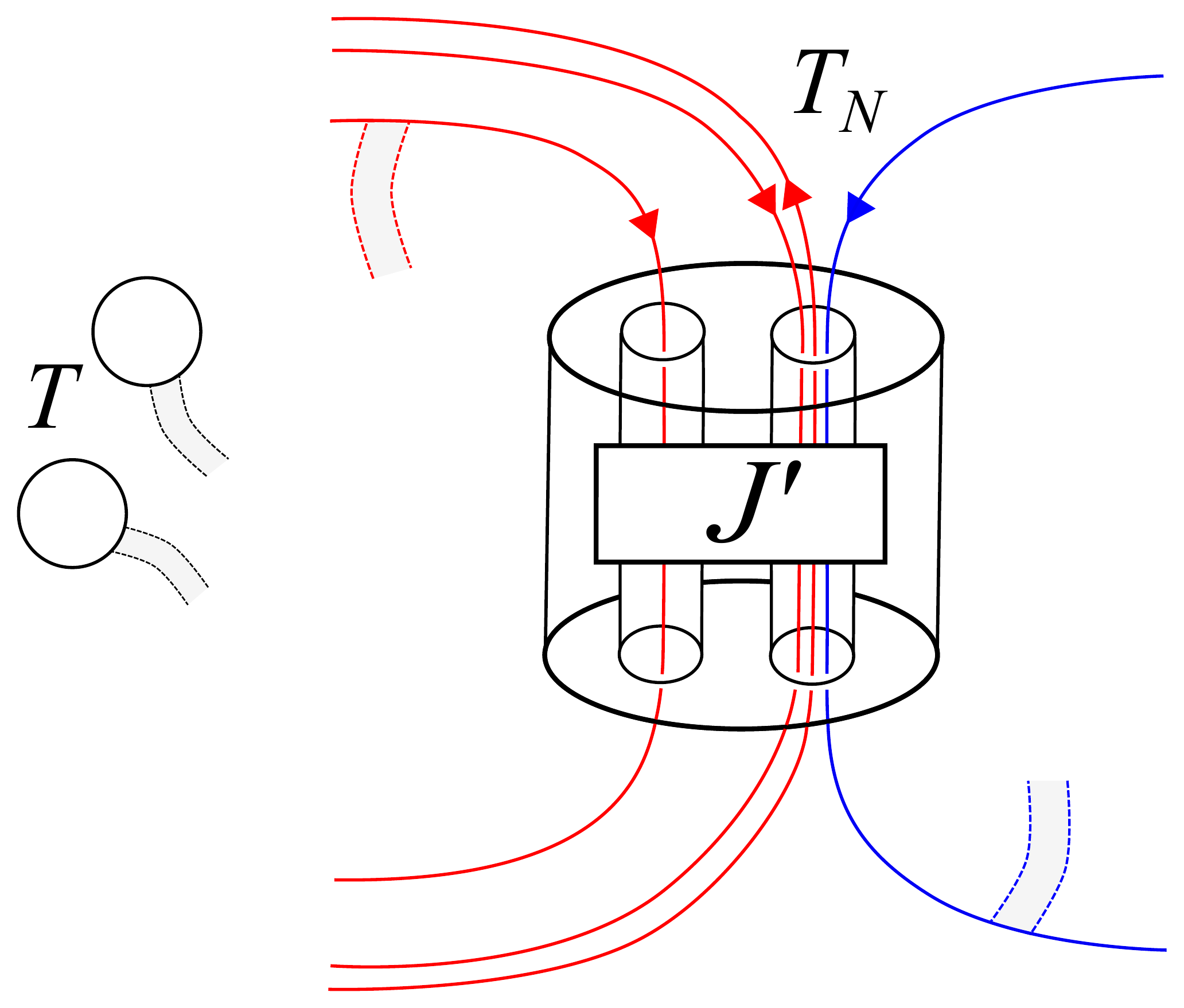}
	\caption{Infected link with fusion bands.}
	\label{fig:morse2}
	\end{center} 
\end{figure}

Remaining is the case where fusion bands cannot be made to avoid intersecting $\mathbb{E}_\varphi\times[0,1]$. Then embed a multidisk $\mathbb{E}'$ with subdisks $D_{1}',...,D_{m}'$ via $\varphi':\mathbb{E}'\hookrightarrow S^3$ such that $\mathbb{E}'_{\varphi'}\times[0,1]$ intersects $M$ so that each thickened subdisk $D_{i}'\times [0,1]$ contains a trivial $n_i$-component string link corresponding to the $i^{th}$ set of components in the $(n_1,...,n_m)$ shaking of $L'$. We may choose this embedding so that it avoids intersecting the fusion bands, $T$, and $\mathbb{E}_\varphi\times[0,1]$.

\begin{figure}[ht]
	\begin{center}
	\includegraphics[width=.4\textwidth]{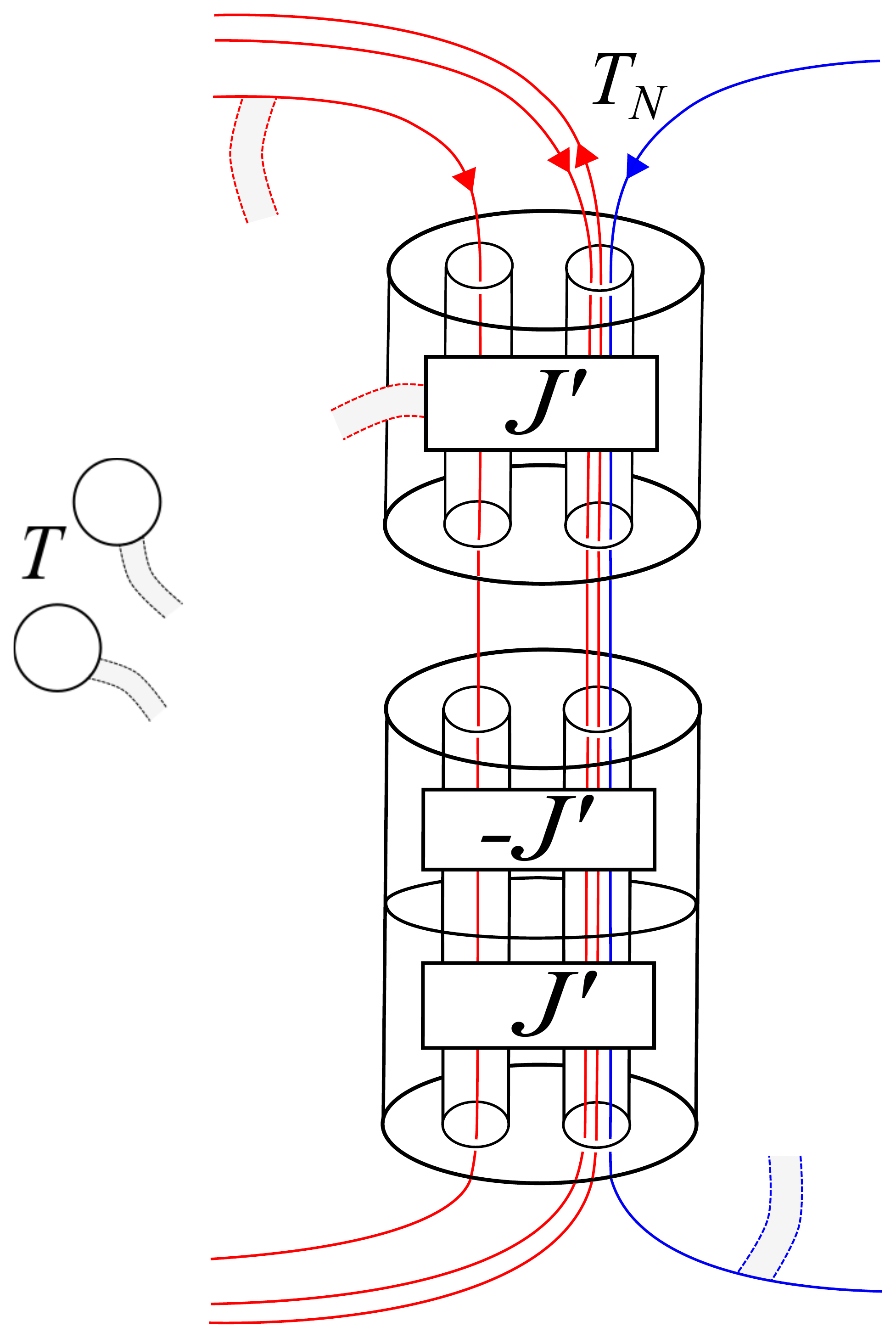}
	\caption{Infecting link to avoid fusion bands.}
	\label{fig:morse3}
	\end{center}
\end{figure}

Now, we may multi-infect $M$ at $\mathbb{E}'_{\varphi'}$ with the string link $J'\#-J'$ and framing $0$; see Figure \ref{fig:morse3}. This is a slice string link, thus $M$ is concordant to the multi-infected link $I(M,J'\#-J',\mathbb{E}'_{\varphi'})$. We can think of multi-infecting along $\mathbb{E}'_{\varphi'}$ by $J'\#-J'$ as infecting along $\mathbb{E}'_{\varphi_1'}$ by $J'$ where $\mathbb{E}'_{\varphi_1'}\times[0,1]:=\mathbb{E}'_{\varphi'}\times[0,0.5]$ and along $\mathbb{E}'_{\varphi_2'}$ by $-J'$ with where $\mathbb{E}'_{\varphi_2'}\times[0,1]:=\mathbb{E}'_{\varphi'}\times[0.5,1]$.


Now define $S=I(M,-J',\mathbb{E}'_{\varphi_2'})$. Then notice $I(S,J',\mathbb{E}'_{\varphi_1'})$ is isotopic to $I(M,J'\#-J',\mathbb{E}'_{\varphi'})$. Observe, $S$ is slice since it is the fusion of the trivial link $T$ and an infection of the trivial link $T_N$ by the slice string link $J'\#-J'$. Hence, $J$ is concordant to $I(S,J',\mathbb{E}'_{\varphi_1'})$, as desired.
\end{proof}

\begin{cor}
If $L$ is shake concordant to $L'$, then  $I(S,J,\mathbb{E}_\varphi)$ is concordant to $I(S',J',\mathbb{E'}_{\varphi'})$ for some $m$-component slice links $S$ and $S'$, $m$-component string links $J$ and $J'$ with closures $\widehat{J}=L$ and $\widehat{J'}=L'$, and embeddings of multidisks $\mathbb{E}_\varphi$ respecting $S$ and $\mathbb{E'}_{\varphi'}$ respecting $S'$.
\label{cor:shakethenconc}
\end{cor}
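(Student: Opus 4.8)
The plan is to run the Morse-theoretic argument behind Lemma~\ref{lemma:shakeimplies}, but to extract information from \emph{both} ends of the shake concordance at once, so that we never need a literal copy of $L$ or $L'$ at an endpoint. A tempting alternative --- precompose the given shake concordance with a genus-zero ``shaking cobordism'' from an honest copy of $L$ to the shaking $sh(L)$ appearing at level $0$, and then quote Lemma~\ref{lemma:shakeimplies} directly --- does not work, because gluing two genus-zero surfaces along a multicomponent link typically produces positive genus, and that genus cannot obviously be removed by ambient surgery (the interface link is knotted and linked). So I would instead do the surgery-like cutting at a middle level, as in Lemma~\ref{lemma:shakeimplies}.

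First I would take the given $(2n_1+1,\dots,2n_m+1;2n_1'+1,\dots,2n_m'+1)$ shake concordance, realized by disjoint connected genus-zero surfaces $F_1,\dots,F_m\subset S^3\times[0,1]$, and isotope it into the Morse normal form used in Lemma~\ref{lemma:shakeimplies}: all local maxima at level $\{4/5\}$, all split saddles at $\{3/5\}$, all join saddles at $\{2/5\}$, all local minima at $\{1/5\}$ (available since each $F_i$ is connected of genus zero). Writing $M_i=F_i\cap(S^3\times\{1/2\})$, the middle level $M=\sqcup_i M_i$ is an $m$-component link; reading each $F_i$ from the bottom exhibits $M$ as a fusion of the shaking $sh(L)$ at level $0$ with a trivial link (one trivial component per local minimum), while reading from the top exhibits $M$ as a fusion of the shaking $sh(L')$ at level $1$ with a trivial link.

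Then I would apply the relevant half of the proof of Lemma~\ref{lemma:shakeimplies} to each of these two descriptions of $M$. As there, $sh(L)=I(T_L,J,\mathbb{E}_\psi)$ for a shaking $T_L$ of the trivial link (hence slice), an $m$-component string link $J$ with $\widehat J=L$, and a multidisk $\mathbb{E}_\psi$ strongly respecting $T_L$; if the fusion bands building $M$ can be isotoped off $\mathbb{E}_\psi\times[0,1]$ then $M$ is itself the infection of a slice link with these data, and otherwise one infects along an auxiliary multidisk by the slice string link $J\#-J$ to obtain a concordance from $M$ to $I(S,J,\mathbb{E}_\varphi)$ with $S$ slice, $\widehat J=L$, and $\mathbb{E}_\varphi$ respecting $S$. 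The identical argument applied to the second description gives a concordance from $M$ to $I(S',J',\mathbb{E}'_{\varphi'})$ with $S'$ slice, $\widehat{J'}=L'$, and $\mathbb{E}'_{\varphi'}$ respecting $S'$. Concatenating the two concordances through $M$ yields the desired concordance between $I(S,J,\mathbb{E}_\varphi)$ and $I(S',J',\mathbb{E}'_{\varphi'})$, completing the converse direction of Theorem~\ref{thm:classification}.

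The hard part will be exactly what it is in Lemma~\ref{lemma:shakeimplies}: controlling the interaction between the fusion bands and the infection region, and the bookkeeping that one ends with a multidisk that \emph{respects} (rather than strongly respects) $S$, and that this property, together with sliceness of $S$, survives both the fusion with the trivial link and the $J\#-J$ correction. But once this is verified on one side it is symmetric and applies verbatim on the other, so the only genuinely new point beyond Lemma~\ref{lemma:shakeimplies} is the observation that its Morse argument is really about the middle-level link $M$ and hence can be fed a shaking of $L$ at \emph{each} end.
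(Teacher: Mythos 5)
Your proposal follows exactly the paper's argument: put the surfaces into the same Morse normal form (minima at $1/5$, joins at $2/5$, splits at $3/5$, maxima at $4/5$), take the middle-level link $M$, observe that $M$ is $(1,\dots,1;*)$-shake concordant to both $L$ and $L'$ via the two halves, apply Lemma~\ref{lemma:shakeimplies} to each half, and concatenate the resulting concordances through $M$. The only cosmetic difference is that you partially inline the proof of Lemma~\ref{lemma:shakeimplies} and add a (correct) remark about why a naive precomposition with a ``shaking cobordism'' would spoil genus; the paper instead just cites the lemma as a black box.
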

\begin{proof}
Suppose $L$ is shake concordant to $L'$. Then there exist surfaces $F_1,..,F_m$ that bound a shaking of $L$ in $S^2\times\{0\}$ and a shaking of $L'$ in $S^2\times\{1\}$. As in the proof of Lemma \ref{lemma:shakeimplies}, we may construct a Morse function $f:S^3\times[0,1]\rightarrow[0,1]$ such that when restricted to each $F_i$, all maxima occur at level $\{\frac{4}{5}\}$, split saddles at level $\{\frac{3}{5}\}$, join saddles at level $\{\frac{2}{5}\}$, and local minima at level $\{\frac{1}{5}\}$. Hence, the level $\{\frac{1}{2}\}$ of each $F_i$ is a connected component $M_i$ of some $m$-component link $M$. Thus $M$ is $(1,...,1; n_1,...,n_d)$ shake concordant to $L$ and $M$ is $(1,...,1; s_1,...,s_d)$ shake concordant to $L'$. Therefore, by Lemma \ref{lemma:shakeimplies}, there exists $S$, $S'$, $\mathbb{E}_\varphi$, and $\mathbb{E}'_{\varphi'}$ with the stated conditions such that $M$ is concordant to $I(S,J,\mathbb{E}_\varphi)$ and $M$ is concordant to $I(S',J',\mathbb{E}'_{\varphi'})$.
\end{proof}

Our classification theorem results from combining Corollary \ref{cor:concthenshake} and Corollary \ref{cor:shakethenconc}:
\begin{thm}
\label{thm:classification}
The $m$-component links $L$ and $L'$ are shake concordant if and only if there exist links obtained by string link infection $I(S,J,\mathbb{E}_\varphi)$ and $I(S', J',\mathbb{E}'_{\varphi'})$ that are concordant for some:
		\begin{itemize}
				\item $m$-component slice links $S$ and $S'$,
				\item $m$-component string links $J$, $J'$ with closures $\widehat{J}=L$ and $\widehat{J}'=L'$,
				\item and embedded multidisks $\mathbb{E}_\varphi$ that respects $L$ and $\mathbb{E}'_{\varphi'}$ that respects $L'$.
		\end{itemize}
\label{thm:classification}
\end{thm}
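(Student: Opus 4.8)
The plan is to obtain the theorem as a purely formal consequence of the two corollaries established above, treating each implication of the biconditional in turn.

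First, for the implication ``shake concordant $\Rightarrow$ concordant infections exist,'' I would invoke Corollary~\ref{cor:shakethenconc} directly: its hypothesis is exactly that $L$ is shake concordant to $L'$, and its conclusion supplies $m$-component slice links $S, S'$, $m$-component string links $J, J'$ with $\widehat{J} = L$ and $\widehat{J'} = L'$, and embedded multidisks $\mathbb{E}_\varphi$, $\mathbb{E}'_{\varphi'}$ respecting $S$, $S'$ respectively, for which $I(S,J,\mathbb{E}_\varphi)$ is concordant to $I(S',J',\mathbb{E}'_{\varphi'})$. This is precisely the data asserted by the theorem, so nothing further is needed in this direction.

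Second, for the reverse implication, I would invoke Corollary~\ref{cor:concthenshake}: starting from concordant infected links $I(S,J,\mathbb{E}_\varphi)$ and $I(S',J',\mathbb{E}'_{\varphi'})$ with $S, S'$ slice and the multidisks respecting them, that corollary immediately yields that $\widehat{J}$ is shake concordant to $\widehat{J'}$, i.e.\ that $L$ is shake concordant to $L'$. Again this is a one-line citation once the hypotheses are matched up.

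The only point that requires care is the bookkeeping around the word ``respects'': to make the two corollaries dovetail, the multidisks in the theorem statement should be read as respecting the slice links $S$ and $S'$, with the original links $L$, $L'$ entering only as the closures $\widehat{J}$, $\widehat{J'}$. With that reading the hypotheses and conclusions of Corollary~\ref{cor:concthenshake} and Corollary~\ref{cor:shakethenconc} fit together with no gap. I do not anticipate any genuine obstacle here --- the substantive content of the theorem is entirely contained in Lemma~\ref{lemma:infectedshake} and Lemma~\ref{lemma:shakeimplies} (hence in the two corollaries), and this final step is just assembling them.
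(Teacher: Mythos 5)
Your proposal matches the paper's proof exactly: the paper assembles the theorem by citing Corollary~\ref{cor:concthenshake} and Corollary~\ref{cor:shakethenconc}, precisely as you do. You also correctly flag a notational slip in the theorem statement --- the multidisks should be read as respecting the slice links $S$, $S'$ (as in both corollaries) rather than $L$, $L'$.
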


This then gives the following classification of shake slice links.

\begin{cor}
\label{cor:classification}
	The $m$-component link $L$ is shake slice if and only if the link obtained by string link infection $I(S,J,\mathbb{E}_\varphi)$ is slice for some:
		\begin{itemize}
				\item $m$-component slice link $S$,
				\item $m$-component string link $J$ with closure $\widehat{J}=L$,
				\item and embedded multidisk $\mathbb{E}_\varphi$ that respect $S$.
		\end{itemize}
\label{cor:classifyshakeslice}
\end{cor}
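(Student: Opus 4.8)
The plan is to derive Corollary~\ref{cor:classification} as the special case of Theorem~\ref{thm:classification} in which $L'$ is the $m$-component trivial link $U$, using the observation already recorded in Section~2 that an $m$-component link is shake slice if and only if it is shake concordant to $U$. First I would invoke Theorem~\ref{thm:classification} with $L' = U$: this yields $m$-component slice links $S, S'$, string links $J, J'$ with $\widehat{J} = L$ and $\widehat{J'} = U$, and multidisks $\mathbb{E}_\varphi$ respecting $S$... wait, I should be careful—the theorem as stated has $\mathbb{E}_\varphi$ respecting $L$, not $S$. Actually the cleaner route is to go back to Lemma~\ref{lemma:shakeimplies} and Lemma~\ref{lemma:infectedshake} directly, since those are stated with the multidisk respecting the slice link $S$, which is the formulation appearing in the corollary.

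So the actual approach: for the forward direction, suppose $L$ is shake slice, i.e. $(n_1,\dots,n_m;1,\dots,1)$ shake concordant to $U$. By Lemma~\ref{lemma:shakeimplies} (with $L'=U$, noting $\widehat{J'}=U$ forces the closure condition trivially and that the shaking parameters on the $U$ side can be taken to be $(1,\dots,1)$), $L$ is concordant to $I(S,J',\mathbb{E}_\varphi)$ for some slice link $S$, string link $J'$ with $\widehat{J'}=U$, and multidisk $\mathbb{E}_\varphi$ respecting $S$. But a link concordant to a slice link is slice, so $I(S,J',\mathbb{E}_\varphi)$ is slice. This does not quite match the statement, which wants $\widehat{J}=L$; so instead I would apply the lemmas in the opposite configuration, or—more simply—observe that being shake slice is symmetric in the sense that $U$ is $(1,\dots,1;n_1,\dots,n_m)$ shake concordant to $L$, and apply Lemma~\ref{lemma:shakeimplies} to this, producing $U$ concordant to $I(S,J,\mathbb{E}_\varphi)$ with $\widehat{J}=L$, $S$ slice, $\mathbb{E}_\varphi$ respecting $S$. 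Since $U$ is slice, $I(S,J,\mathbb{E}_\varphi)$ is slice. That is exactly the conclusion.

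For the converse, suppose $I(S,J,\mathbb{E}_\varphi)$ is slice for the stated data. By Lemma~\ref{lemma:infectedshake}, $I(S,J,\mathbb{E}_\varphi)$ is $(1,\dots,1;n_1,\dots,n_m)$ shake concordant to $\widehat{J} = L$. Since $I(S,J,\mathbb{E}_\varphi)$ is slice, it is concordant—hence shake concordant—to $U$. Composing shake concordances (stacking the relevant cobordisms in $S^3\times[0,1]$, which is routine), $L$ is shake concordant to $U$, i.e. $L$ is shake slice. I would spell out that composing a shake concordance with an honest concordance again gives a shake concordance, since the genus-zero surfaces glue along the common shaking.

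The main obstacle I anticipate is bookkeeping the shaking multiplicities and making sure the "respects" condition lands on the correct link ($S$ versus $L$) in each invocation; the two lemmas are stated with slightly different conventions than the classification theorem, so I would be careful to cite Lemma~\ref{lemma:shakeimplies} and Lemma~\ref{lemma:infectedshake} directly rather than Theorem~\ref{thm:classification}, and to note explicitly that the trivial link is slice and that "shake concordant to $U$" is the definition of shake slice recorded at the end of Section~2. Everything else is formal.
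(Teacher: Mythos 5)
Your proof is correct, and it takes a route that is more careful than the paper's own one‑paragraph argument, while resting on the same underlying machinery (Lemma~\ref{lemma:infectedshake} and Lemma~\ref{lemma:shakeimplies}). The paper proves the corollary by simply invoking Theorem~\ref{thm:classification} with $L'$ the trivial link $U$ and then asserting ``Hence, $I(S',J',\mathbb{E}'_{\varphi'})$ is slice,'' where $\widehat{J}'=U$. As stated this is either a typo for $I(S,J,\mathbb{E}_\varphi)$ or an unjustified step, since $S'$ slice and $\widehat{J}'$ trivial do not by themselves force the infection to be slice. You sidestep this, as well as the ``respects $L$'' versus ``respects $S$'' mismatch between the statement of Theorem~\ref{thm:classification} and the statement of the corollary, by citing the two lemmas directly. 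Your forward direction (flip the shake concordance to view $U$ as $(1,\dots,1;n_1,\dots,n_m)$ shake concordant to $L$, apply Lemma~\ref{lemma:shakeimplies}, then use that anything concordant to $U$ is slice) and your converse (apply Lemma~\ref{lemma:infectedshake}, then glue the annuli of the honest concordance from $U$ to $I(S,J,\mathbb{E}_\varphi)$ onto the $(1,\dots,1)$ boundary of the genus-zero surfaces) are both valid; the gluing step works precisely because the shake‑concordance surfaces meet $I(S,J,\mathbb{E}_\varphi)$ in a single copy of each component. In short, you recovered the intended argument and filled in the step the paper elides.
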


\begin{proof}
	A link $L$ is shake slice if and only if it is $(n_1,...,n_m;1,...,1)$ shake concordant to the trivial link. By Theorem \ref{thm:classification} this is equivalent to the links obtained by string link infection $I(S,J,\mathbb{E}_\varphi)$ and $I(S',J',\mathbb{E'}_{\varphi'})$ being concordant where $\widehat{J}=L$ and $\widehat{J}'$ is a trivial link. Hence, $I(s',J',\mathbb{E}_{\varphi'}')$ is slice.
\end{proof}

\subsection*{Remark} In Theorem \ref{thm:classification} {\it shake concordant} can be modified to {\it strong shake concordant} by also modifying the condition on multidisks from {\it respect} to {\it strongly respect}. Similarly, in Corollary \ref{cor:classifyshakeslice}, {\it shake slice} can be modified to {\it strong shake slice} by also modifying the condition on multidisks from {\it respect} to {\it strongly respect}.  The proofs are identical up to maintaining the modified conditions throughout.


\section{Invariance of Milnor Invariants}
In this section, we show that the first non-vanishing Milnor invariant is an invariant of shake concordance. We will need the following lemma.

\begin{lem}
	\label{lemma:milnor}
		Let $I$ be a multi-index whose indices are from $\{1,...,m\}$ and let $k_i$ be the number of occurrences of the index $i$ in $I$. Let $L=L_1\sqcup...\sqcup L_m$ be an $m$-component link with $\overline{\mu}_L(I')=0$ whenever $|I'|<|I|$ and let $J$ be an $m$-component string link whose closure $\widehat{J}$ has $\overline{\mu}_{\widehat{J}}(I')=0$ whenever $|I'|<|I|$. Let embedded $m$-multi-disk $\mathbb{E}_\varphi$ respect $L$. Then $I(L,J,\mathbb{E}_\varphi)$ is also a link with $\overline{\mu}_{I(L,J,\mathbb{E}_\varphi)}(I')=0$ whenever $|I'|<|I|$ and \[\overline{\mu}_{I(L,J,\mathbb{E}_\varphi)}(I)=\overline{\mu}_L(I)+\overline{\mu}_{\widehat{J}}(I).\]
	\end{lem}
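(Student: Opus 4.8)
Write $l=|I|$ and $I=i_1i_2\cdots i_{l-1}i_l$, and for a group $H$ let $\gamma_q H$ denote the $q$-th term of its lower central series ($\gamma_1 H=H$, $\gamma_{q+1}H=[H,\gamma_q H]$). The plan is to run Milnor's group-theoretic description of the $\overline{\mu}$-invariants through a van Kampen analysis of the infected link exterior, exploiting the vanishing of all invariants below level $l$ to place the relevant longitudes in $\gamma_{l-1}$, where the noncommutative bookkeeping collapses to an abelian one.

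First I would invoke Milnor's theorem. Since $\overline{\mu}_L(I')=0$ for every $|I'|<l$, the meridians $\mu_1,\dots,\mu_m$ induce an isomorphism $F/\gamma_l F\xrightarrow{\ \cong\ }G_L/\gamma_l G_L$, where $G_L=\pi_1(S^3\setminus L)$ and $F$ is free on letters $X_1,\dots,X_m$; in particular each longitude $\lambda^L_i$ lies in $\gamma_{l-1}G_L$, hence is represented modulo $\gamma_l G_L$ by a word $w^L_i\in\gamma_{l-1}F$, and the Magnus expansion $X_j\mapsto 1+X_j$ sends $w^L_i$ to $1$ plus terms of degree $\ge l-1$ whose degree-$(l-1)$ part is $\sum\overline{\mu}_L(j_1\cdots j_{l-1}i)\,X_{j_1}\cdots X_{j_{l-1}}$. (With all lower invariants vanishing, the indeterminacy at $I$ is trivial, so these are honest integers.) The same discussion applied to $\widehat{J}$ gives $F/\gamma_l F\cong G_{\widehat{J}}/\gamma_l G_{\widehat{J}}$, longitudes $\lambda^J_i\in\gamma_{l-1}G_J$ represented by words $w^J_i\in\gamma_{l-1}F$ whose degree-$(l-1)$ Magnus parts record $\overline{\mu}_{\widehat{J}}(\cdot)$.

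Next I would pin down $G:=\pi_1\bigl(S^3\setminus I(L,J,\mathbb{E}_\varphi)\bigr)$. Infection happens inside the ball $\mathbb{E}_\varphi\times[0,1]$, where $L$ is a trivial tangle that is replaced, through each subdisk $D_i$, by $n_i$ parallel copies of the $i$-th strand of $J$ oriented as the original strands; the meridians $\mu_j$ can be taken outside this ball, so they survive and give $F\to G$, $X_j\mapsto\mu_j$. Decomposing $S^3\setminus I(L,J,\mathbb{E}_\varphi)$ into the exterior of $L$ cut along the thickened multidisk and the exterior of the inserted (cabled) string link and applying van Kampen, I would show that in $G/\gamma_l G$
\[
\lambda_i^{\,I(L,J,\mathbb{E}_\varphi)}\ \equiv\ \lambda^L_i\cdot\lambda^J_i,
\]
with $\lambda^L_i,\lambda^J_i$ read as $w^L_i,w^J_i$ after replacing each $X_j$ by $\mu_j$. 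Tracing the longitude of the $i$-th component of the infected link: the portions outside the ball reassemble $\lambda^L_i$, while each passage of $L_i$ through $D_i$ contributes a conjugate of $(\lambda^J_i)^{\pm1}$ and each passage through $D_k$ ($k\ne i$) a conjugate of $(\lambda^J_k)^{\pm1}$, with signs matching orientations. Because $\mathbb{E}_\varphi$ respects $L$, the net exponent through $D_i$ is $+1$ and through each $D_k$ is $0$; and because the $\lambda^J_k$ land in $\gamma_{l-1}G$, which is central in $G/\gamma_l G$, every conjugation and every cancelling pair disappears modulo $\gamma_l G$, so the ball contributes exactly $\lambda^J_i$. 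Finally each meridian $y_j$ of $J$ maps to an element $\equiv\mu_j\pmod{\gamma_2 G}$ — this is precisely the respecting hypothesis that $D_j$ meets $L_j$ algebraically once and the other components algebraically zero — and substituting such elements into $w^J_i\in\gamma_{l-1}F$ changes the result only modulo $\gamma_l G$, so $\lambda^J_i$ may indeed be read off in the meridians $\mu_j$.

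To finish, I would combine the two facts via multiplicativity of the Magnus expansion:
\[
\widetilde{\lambda_i^{\,I(L,J,\mathbb{E}_\varphi)}}\ =\ \widetilde{w^L_i}\cdot\widetilde{w^J_i}\ =\ \bigl(1+(\deg\ge l-1)\bigr)\bigl(1+(\deg\ge l-1)\bigr).
\]
Hence the expansion of $\lambda_i^{\,I(L,J,\mathbb{E}_\varphi)}$ has no nonconstant terms of degree less than $l-1$ — equivalently $\overline{\mu}_{I(L,J,\mathbb{E}_\varphi)}(I')=0$ for all $|I'|<l$, which in turn kills the indeterminacy at $I$ — and its degree-$(l-1)$ part is the sum of the degree-$(l-1)$ parts of $\widetilde{w^L_i}$ and $\widetilde{w^J_i}$, which reads off as $\overline{\mu}_{I(L,J,\mathbb{E}_\varphi)}(I)=\overline{\mu}_L(I)+\overline{\mu}_{\widehat{J}}(I)$. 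I expect the main obstacle to be the third paragraph: extracting from the van Kampen presentation that the $i$-th longitude of the infected link is $\lambda^L_i\lambda^J_i$ modulo $\gamma_l G$ with meridians correctly matched. The two delicate points are that $D_i$ may meet $L$ in several points, so infection inserts a cable rather than a single copy of the $i$-th strand of $J$, and that the meridians of $J$ must be identified with those of $L$ to order $l$; both are handled by the "$\mathbb{E}_\varphi$ respects $L$" hypothesis together with the fact that everything relevant lives in $\gamma_{l-1}$, where conjugation and sign cancellation are invisible. Everything else is Milnor's dictionary between longitudes, the lower central series, and Magnus coefficients.
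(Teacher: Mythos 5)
Your plan runs Milnor's theory through the fundamental group — longitudes in $\gamma_{l-1}$, the Magnus expansion, and a van Kampen decomposition of the infected exterior — which is a genuinely different route from the paper's. The paper instead views $I(L,J,\mathbb{E}_\varphi)$ as a fusion (band sum) of $L$ with a split link $\widehat{J'}$ built from signed parallel copies of the components of $\widehat{J}$, applies a generalization of Cochran's Theorem 8.13 for Milnor invariants of fusions, and then carries out a combinatorial cancellation over multi-indices $I'$ with $h(I')=I$ using two labeling functions $h$ and $g$ (which component of $L$ a parallel copy is banded to, and which component of $\widehat{J}$ it is a copy of, respectively).

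The gap is precisely at the step you flag as the ``main obstacle,'' and I do not think the heuristic ``everything relevant lives in $\gamma_{l-1}$, where conjugation and sign cancellation are invisible'' closes it. The claim $\lambda_i^{I(L,J,\mathbb{E}_\varphi)}\equiv\lambda_i^L\lambda_i^J\pmod{\gamma_l G}$ has two unaddressed difficulties that only arise in the respecting-but-not-strongly-respecting case, which is the case the lemma is really about (the strongly respecting case is already \cite[Lemma~4.1]{jangkimpowell14}). First, when $D_k$ meets $L$ in several points the infection inserts a \emph{cable} of $J_k$, and the longitude of a parallel copy $J_k^{(a)}$ in the cabled exterior is not $\lambda_k^J$ but $\lambda_k^J$ times a meridian word recording its linking with the other parallel strands; those corrections are degree one, not in $\gamma_{l-1}$, so centrality of $\gamma_{l-1}$ in $G/\gamma_l G$ does not absorb them — one must argue they cancel across the full longitude. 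Second, under van Kampen the meridian of $J_k^{(a)}$ is identified with $\mu_j$ where $L_j$ is the component to which that copy is adjoined, and $j$ need not equal $k$; so you cannot simply substitute $X_j\mapsto\mu_j$ in $w_i^J$. The algebraic-intersection-one hypothesis guarantees the net counts come out right, but extracting that from a group presentation requires exactly the bookkeeping the paper performs with $h$ and $g$, culminating in the cancellation $\sum_{h(I')=I,\,g(I')\neq I}\overline{\mu}_{\widehat{J'}}(I')=0$ and the count $\prod_i(a_{ii}-b_{ii})^{k_i}=1$.

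So the approach is sound in spirit and would certainly work in the strongly respecting case, but as written it asserts rather than proves the longitude formula in the general case. To close the gap you would have to carry the cable-linking corrections and the strand-to-component matching explicitly through the van Kampen computation — at which point the argument would likely converge to the paper's combinatorial one, just phrased in $\pi_1$.
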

	\begin{proof}
		If the embedded multidisk $\mathbb{E}_{\varphi}$ with subdisks $D_1,...,D_m$ strongly respects $L$, then the desired result follows immediately from \cite[Lemma 4.1]{jangkimpowell14}. Otherwise, we do not meet the conditions of \cite[Lemma 4.1]{jangkimpowell14}, but its proof generalizes to accommodate this case, which we now provide for completeness.
		
		Suppose $D_i\cap L_j$ contains $a_{ij}$ positive and $b_{ij}$ negative intersection points. Since $\mathbb{E}_\varphi$ respects $L$, $a_{ii}-b_{ii}=1$ for all $1\leq i\leq m$ and $a_{ij}-b_{ij}=0$ for $i\neq j$. Denote $a_i=\sum_j a_{ij}$ and $b_i=\sum_j b_{ij}$. Let $J'$ be the oriented string link generated by taking $a_i$ parallel copies of the $i$-th component $J_i$ of $J$ and $b_i$ parallel copies of $J_i$ with opposite orientation, for $i=1,...,m$. Notice, $I(L,J,\mathbb{E}_\varphi)$ is the outcome of performing band sums on the split union of $L$ and $\widehat{J'}$ as in Figure \ref{fig:lemma}.
		
		In $\widehat{J'}$ label each parallel copy of $\widehat{J_i}$ with a distinct index $j\in \{1,...,a_i+b_i\}$ for $i=1,...,m$. Define the function $h:\{1,...,\sum_i(a_i+b_i)\}\rightarrow\{1,...,m\}$ which sends the index of a parallel copy of $\widehat{J_i}$ to $j$ where $L_j$ is the component of $L$ that the parallel copy is adjoined to by band sum. By a slight generalization of \cite[Theorem 8.13]{cochran90}, we may determine the contribution to $\overline{\mu}_{I(L,J,\mathbb{E}_\varphi)}(I)$ by summing over all possibilities of $I'$ such that $h(I')=I$:
		\[\overline{\mu}_{I(L,J,\mathbb{E}_\varphi)}(I)=\overline{\mu}_{L}(I)+\sum_{\{I'|h(I')=I\}}\overline{\mu}_{\widehat{J'}}(I').\] 
		
	To calculate the last term, we introduce the function $g:\{1,...,\sum_i(a_i+b_i)\}\rightarrow\{1,...,m\}$ which sends the index of a parallel copy of $\widehat{J_i}$ to $i$. Then:
		\[\sum_{\{I'|h(I')=I\}}\overline{\mu}_{\widehat{J'}}(I')=\sum_{\{I'|h(I')=I,\; g(I')=I\}}\overline{\mu}_{\widehat{J'}}(I')+\sum_{\{I'|h(I')=I,\; g(I')\neq I\}}\overline{\mu}_{\widehat{J'}}(I').\]

	
		Recall that reversing the orientation on a single component $L_i$ of a link $L$ changes the sign of the Milnor invariant $\overline{\mu}_L(I)$ by $(-1)^{k_i}$ where $k_i$ is the number of times $i$ appears in $I$. When $h(I')=I\neq g(I')$, there exists a component labeled $i$ such that $h(i)\neq g(i)$. But as $\mathbb{E}_\varphi$ respects $L$, the component labeled $i$ forms a pair with a component labeled $j$ with opposite orientation such that $i$ and $j$ are parallel copies of $\widehat{J}_{g(i)}$ and are adjoined by band sum to $L_{h(i)}$. That is, $g(i)=g(j)$ and $h(i)=h(j)$. If $I'_{i\rightarrow j}$ denotes the multi-index formed by replacing each $i$ in $I'$ with $j$, then we have:
		\[\sum_{\{I'|h(I')=I,\; g(I')\neq I\}}\overline{\mu}_{\widehat{J'}}(I')=\sum_{\{I'_{i\rightarrow j}|h(I'_{i\rightarrow j})=I,\; g(I'_{i\rightarrow j})\neq I\}}\overline{\mu}_{\widehat{J'}}(I'_{i\rightarrow j}).\]	
		But since $\overline{\mu}_{\widehat{J'}}(I')=-\overline{\mu}_{\widehat{J'}}(I'_{i\rightarrow j})$, it follows:
		\[\sum_{\{I'|h(I')=I\}}\overline{\mu}_{\widehat{J'}}(I')=\sum_{\{I'|h(I')=I,\; g(I')=I\}}\overline{\mu}_{\widehat{J'}}(I')=\sum_{\{I'|h(I')=I,\; g(I')=I\}}\overline{\mu}_{\widehat{J}}(I)\cdot\prod_{j\in I'} r_j^{\lambda_j}\]
	where $r_j\in \{\pm1\}$ is $1$ if the parallel copy of a component of $\widehat{J}$ with index $j$ has the same orientation as $J_{g(j)}$ and $-1$ otherwise, and $\lambda_j$ denotes the number of times that $j$ appears in $I'$. To count the number of $I'$ such that $g(I')=h(I')=I$, notice there are $k_i$ choices  for each $i\in I$ with a signed contribution of $(a_{ii}-b_{ii})^{k_i}$. Therefore,
	\[\sum_{\{I'|h(I')=I,\; g(I')=I\}}\prod_{j\in I'} r_j^{\lambda_j}=\prod_{i\in I} (a_{ii}-b_{ii})^{k_i}=1\]
	and hence,
	\[\overline{\mu}_{I(L,J,\mathbb{E}_\varphi)}(I)=\overline{\mu}_{L}(I)+\overline{\mu}_{\widehat{J}}(I).\]
	\end{proof}

We can now prove that the first non-vanishing Milnor invariants are preserved under shake concordance.

\begin{thm}
\label{thm:firstmilnor}
	If two links $L$ and $L'$ are shake concordant, then they have equal first non-vanishing Milnor invariants. That is, if for some multi-index $I$, $\overline{\mu}_L(I)\neq 0$ and $\overline{\mu}_L(J)=0$ for all $|J|<|I|$, then $\overline{\mu}_L(I)=\overline{\mu}_{L'}(I)$ and $\overline{\mu}_{L'}(J)=0$ for all $|J|<|I|$.	
\end{thm}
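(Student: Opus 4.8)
The plan is to combine the classification theorem (Theorem~\ref{thm:classification}) with the computation of Milnor invariants under string link infection (Lemma~\ref{lemma:milnor}). Suppose $L$ and $L'$ are shake concordant, and let $I$ be a multi-index of minimal length with $\overline{\mu}_L(I)\neq 0$; set $k=|I|$. By Theorem~\ref{thm:classification} there are $m$-component slice links $S,S'$, $m$-component string links $J,J'$ with $\widehat{J}=L$, $\widehat{J'}=L'$, and multidisks $\mathbb{E}_\varphi$ respecting $S$, $\mathbb{E}'_{\varphi'}$ respecting $S'$, such that $I(S,J,\mathbb{E}_\varphi)$ is concordant to $I(S',J',\mathbb{E}'_{\varphi'})$. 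Since Milnor's $\overline{\mu}$ invariants (and in particular the first non-vanishing one, with its indeterminacy) are concordance invariants \cite{casson}, the two infected links have the same $\overline{\mu}$ invariants in all degrees $\leq k$.

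The key step is to apply Lemma~\ref{lemma:milnor} to each side. First I would check the hypotheses: a slice link $S$ has all $\overline{\mu}_S(I')=0$, so certainly $\overline{\mu}_S(I')=0$ for $|I'|<k$, and by assumption $\overline{\mu}_{\widehat{J}}(I')=\overline{\mu}_L(I')=0$ for $|I'|<k$. Hence Lemma~\ref{lemma:milnor} applies with the roles ``$L$''$=S$ and ``$J$''$=J$, giving
\[
\overline{\mu}_{I(S,J,\mathbb{E}_\varphi)}(I')=0 \quad\text{for } |I'|<k, \qquad
\overline{\mu}_{I(S,J,\mathbb{E}_\varphi)}(I)=\overline{\mu}_S(I)+\overline{\mu}_{\widehat{J}}(I)=\overline{\mu}_L(I),
\]
using $\overline{\mu}_S(I)=0$. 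Note that the length-$k$ Milnor invariant is genuinely well-defined here (no indeterminacy) precisely because all lower invariants vanish, so equality of the infected links' invariants in degree $k$ is meaningful.

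Now I would run the same argument on the other side, but being careful not to assume a priori that $L'$ has vanishing low-degree invariants — that is part of what we must prove. The clean way is to induct on length: for each $\ell<k$ in increasing order, knowing $\overline{\mu}_{L'}(I')=0$ for all $|I'|<\ell$ lets Lemma~\ref{lemma:milnor} apply to $I(S',J',\mathbb{E}'_{\varphi'})$ in degree $\ell$, giving $\overline{\mu}_{I(S',J',\mathbb{E}'_{\varphi'})}(I')=\overline{\mu}_{S'}(I')+\overline{\mu}_{L'}(I')=\overline{\mu}_{L'}(I')$; since the left side equals $\overline{\mu}_{I(S,J,\mathbb{E}_\varphi)}(I')=0$, we conclude $\overline{\mu}_{L'}(I')=0$ and can proceed to $\ell+1$. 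After $k$ steps we have $\overline{\mu}_{L'}(I')=0$ for all $|I'|<k$, and one final application of Lemma~\ref{lemma:milnor} in degree $k$ gives $\overline{\mu}_{I(S',J',\mathbb{E}'_{\varphi'})}(I)=\overline{\mu}_{L'}(I)$. Comparing with the computation on the $L$ side via concordance invariance yields $\overline{\mu}_L(I)=\overline{\mu}_{L'}(I)$, which in particular is nonzero, so $I$ is also a first non-vanishing index for $L'$.

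The main obstacle is bookkeeping rather than anything deep: one must make sure the induction on length is set up so that Lemma~\ref{lemma:milnor}'s hypothesis ``$\overline{\mu}_{\widehat{J'}}(I')=0$ for $|I'|<|I|$'' is available at each stage for $L'$, and that the concordance invariance of $\overline{\mu}$ is invoked only in the range of degrees where the invariant is honestly defined (no indeterminacy) — which is exactly the first-non-vanishing range. I would also remark that, by the Remark following Corollary~\ref{cor:classifyshakeslice}, the same proof works verbatim for strong shake concordance, and that specializing to $I$ of length two recovers the statement that linking numbers are $0$-shake concordance invariants and that all $\overline{\mu}$ invariants of a $0$-shake slice link vanish.
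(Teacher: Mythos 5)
Your proposal follows essentially the same strategy as the paper's proof: apply Theorem~\ref{thm:classification} to replace $L,L'$ by concordant infected links, then use Lemma~\ref{lemma:milnor} together with the vanishing of Milnor invariants of slice links to equate the invariants of $L$ and $L'$. Your explicit induction on $|I'|$ to establish the vanishing hypothesis of Lemma~\ref{lemma:milnor} on the $L'$ side is a point the paper leaves implicit, and it is a sound way to make the argument fully rigorous.
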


\begin{proof}
	Suppose $J$ is shake concordant to $J'$. Then we can find slice links $L$ and $L'$, string links $s$ and $s'$ such that $\widehat{s}=J$ and $\widehat{s'}=J'$, and multidisks $\mathbb{E}_\varphi$ and $\mathbb{E'}_{\varphi'}$ respecting $L$ and $L'$, respectively, such that $I(L,s,\mathbb{E}_\varphi)$ is concordant to $I(L',s',\mathbb{E'}_{\varphi'})$. Hence for any multi-index $I$,
	\[\overline{\mu}_{I(L,s,\mathbb{E}_\varphi)}(I)=\overline{\mu}_{I(L',s',\mathbb{E'}_{\varphi'})}(I).\]

	Moreover, by Lemma \ref{lemma:milnor} we have
	\[\overline{\mu}_{I(L,s,\mathbb{E}_\varphi)}(I)=\overline{\mu}_{L}(I)+\overline{\mu}_{J}(I), \;\;\; \overline{\mu}_{I(L',s',\mathbb{E'}_{\varphi'})}(I)=\overline{\mu}_{L'}(I)+\overline{\mu}_{J'}(I).\]
	Since $L$ and $L'$ are slice, all of their Milnor invariants vanish, hence we have,
	\[\overline{\mu}_{J}(I)=\overline{\mu}_{I(L,s,\mathbb{E}_\varphi)}(I)=\overline{\mu}_{I(L',s',\mathbb{E}_{\varphi'})}(I)=\overline{\mu}_{J'}(I).\]
\end{proof}

\begin{cor}
\label{cor:linkingpreserved}
	Linking number is an invariant of shake concordance. That is, if $L$ and $L'$ are shake concordant, then $lk(L_i,L_j)=lk(L_i',L_j')$ where $i\neq j$.
\end{cor}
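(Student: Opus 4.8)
The plan is to derive this immediately from Theorem~\ref{thm:firstmilnor}, using the standard identification of the linking number with the length-two Milnor invariant. Recall that for an $m$-component link $L$ and distinct indices $i,j$, one has $\overline{\mu}_L(ij)=\lk(L_i,L_j)$, and that the length-one invariants $\overline{\mu}_L(i)$ are always zero (indeed undefined/trivial). So a length-two multi-index $I=ij$ with $i\neq j$ is automatically a candidate for a first non-vanishing invariant whenever the corresponding linking number is nonzero.

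First I would dispose of the case where all pairwise linking numbers of $L$ vanish. If $\lk(L_i,L_j)=0$ for all $i\neq j$, I claim the same holds for $L'$. Indeed, suppose toward a contradiction that $\lk(L'_a,L'_b)\neq 0$ for some $a\neq b$. Then $\overline{\mu}_{L'}(ab)\neq 0$ while $\overline{\mu}_{L'}(J)=0$ for all $|J|<2$ (length-one invariants vanish), so $ab$ is a first non-vanishing multi-index for $L'$. Applying Theorem~\ref{thm:firstmilnor} with the roles of $L$ and $L'$ reversed (shake concordance is symmetric) gives $\overline{\mu}_L(ab)=\overline{\mu}_{L'}(ab)\neq 0$, contradicting $\lk(L_a,L_b)=0$. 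Hence all pairwise linking numbers of $L'$ vanish as well, and the identity $\lk(L_i,L_j)=0=\lk(L'_i,L'_j)$ holds for every $i\neq j$.

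Next, for the main case, fix distinct $i,j$ with $\lk(L_i,L_j)\neq 0$. Then $\overline{\mu}_L(ij)\neq 0$ and all shorter multi-indices have vanishing $\overline{\mu}_L$, so $ij$ is a first non-vanishing multi-index for $L$. Theorem~\ref{thm:firstmilnor} yields $\overline{\mu}_{L'}(ij)=\overline{\mu}_L(ij)$, i.e. $\lk(L'_i,L'_j)=\lk(L_i,L_j)$. Combined with the previous paragraph, which handles exactly the pairs $(i,j)$ with $\lk(L_i,L_j)=0$, we conclude $\lk(L_i,L_j)=\lk(L'_i,L'_j)$ for all $i\neq j$.

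There is no real obstacle here; the only subtlety is bookkeeping about which multi-indices qualify as ``first non-vanishing,'' and in particular remembering that length-one Milnor invariants are trivial so that a nonzero linking number genuinely realizes the first non-vanishing invariant. One could phrase the whole argument symmetrically in one stroke by noting that for each pair $\{i,j\}$, either $\overline{\mu}_L(ij)=\overline{\mu}_{L'}(ij)=0$, or one of them is nonzero and then Theorem~\ref{thm:firstmilnor} (applied in the appropriate direction) forces equality; in both cases $\lk(L_i,L_j)=\lk(L'_i,L'_j)$.
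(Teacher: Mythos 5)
Your argument is correct and is exactly the deduction the paper intends (the corollary is stated without proof, following immediately from Theorem~\ref{thm:firstmilnor} via the identification $\overline{\mu}_L(ij)=\lk(L_i,L_j)$ and the triviality of length-one invariants). One small remark on structure: your first two paragraphs set up a global dichotomy (all linking numbers of $L$ vanish versus $\lk(L_i,L_j)\neq0$ for a fixed pair), and the claim that the first paragraph ``handles exactly the pairs $(i,j)$ with $\lk(L_i,L_j)=0$'' is not quite accurate, since that paragraph only treats the case where \emph{all} pairwise linking numbers of $L$ vanish; it doesn't address a pair $(i,j)$ with $\lk(L_i,L_j)=0$ while some other pair has nonzero linking number. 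Your final paragraph, however, gives the clean per-pair argument that actually closes the proof: for each pair $\{i,j\}$, either both $\overline{\mu}_L(ij)$ and $\overline{\mu}_{L'}(ij)$ vanish, or one is nonzero and Theorem~\ref{thm:firstmilnor}, applied with $L$ and $L'$ in the appropriate roles (using symmetry of shake concordance), forces equality. That symmetric formulation is the argument you should lead with; the earlier dichotomy is subsumed by it and can be dropped.
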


\begin{figure}[!h]
	\begin{center}
	\subfloat[]{\includegraphics[width=.25\textwidth]{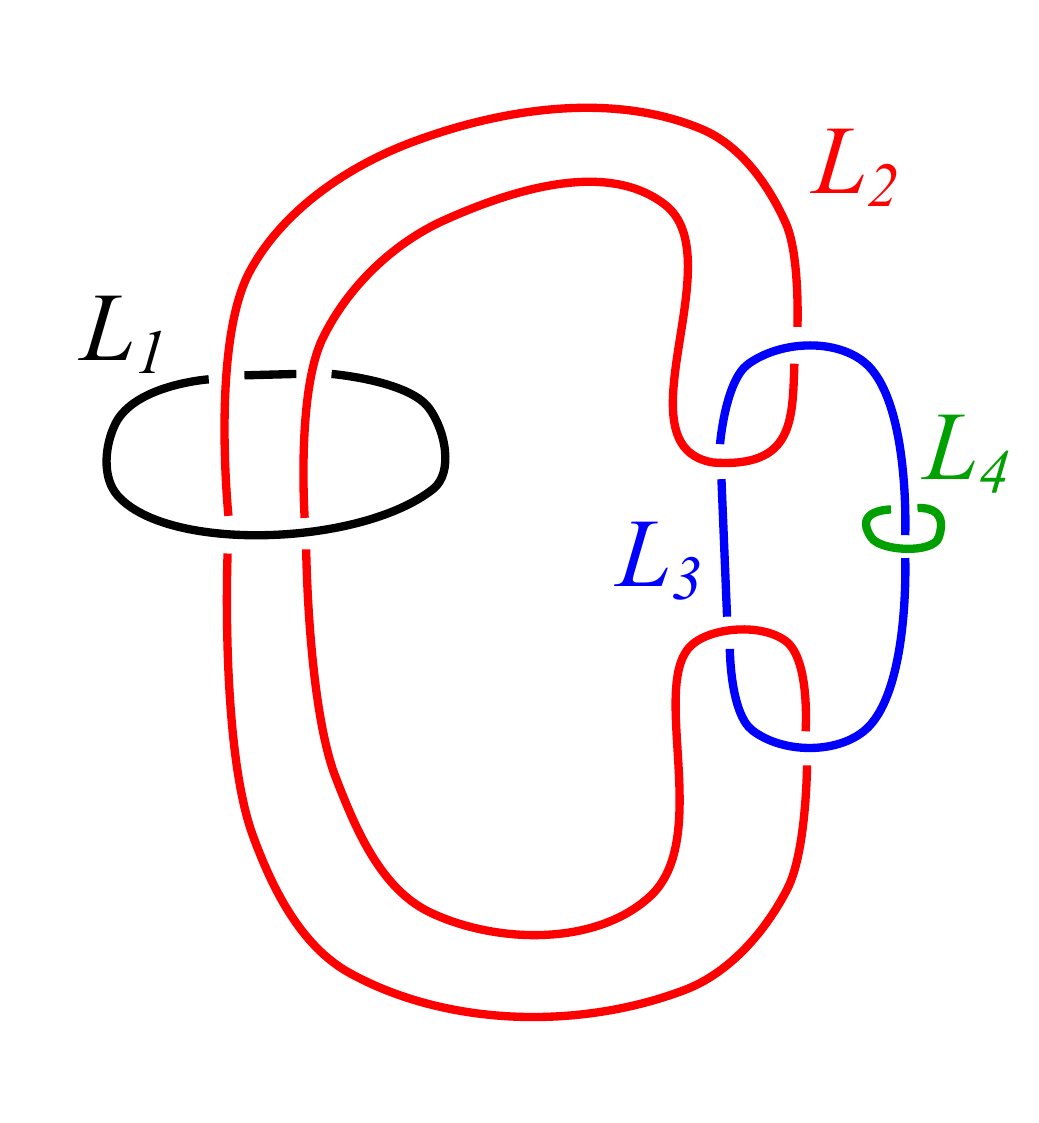}}
	\subfloat[]{\includegraphics[width=.25\textwidth]{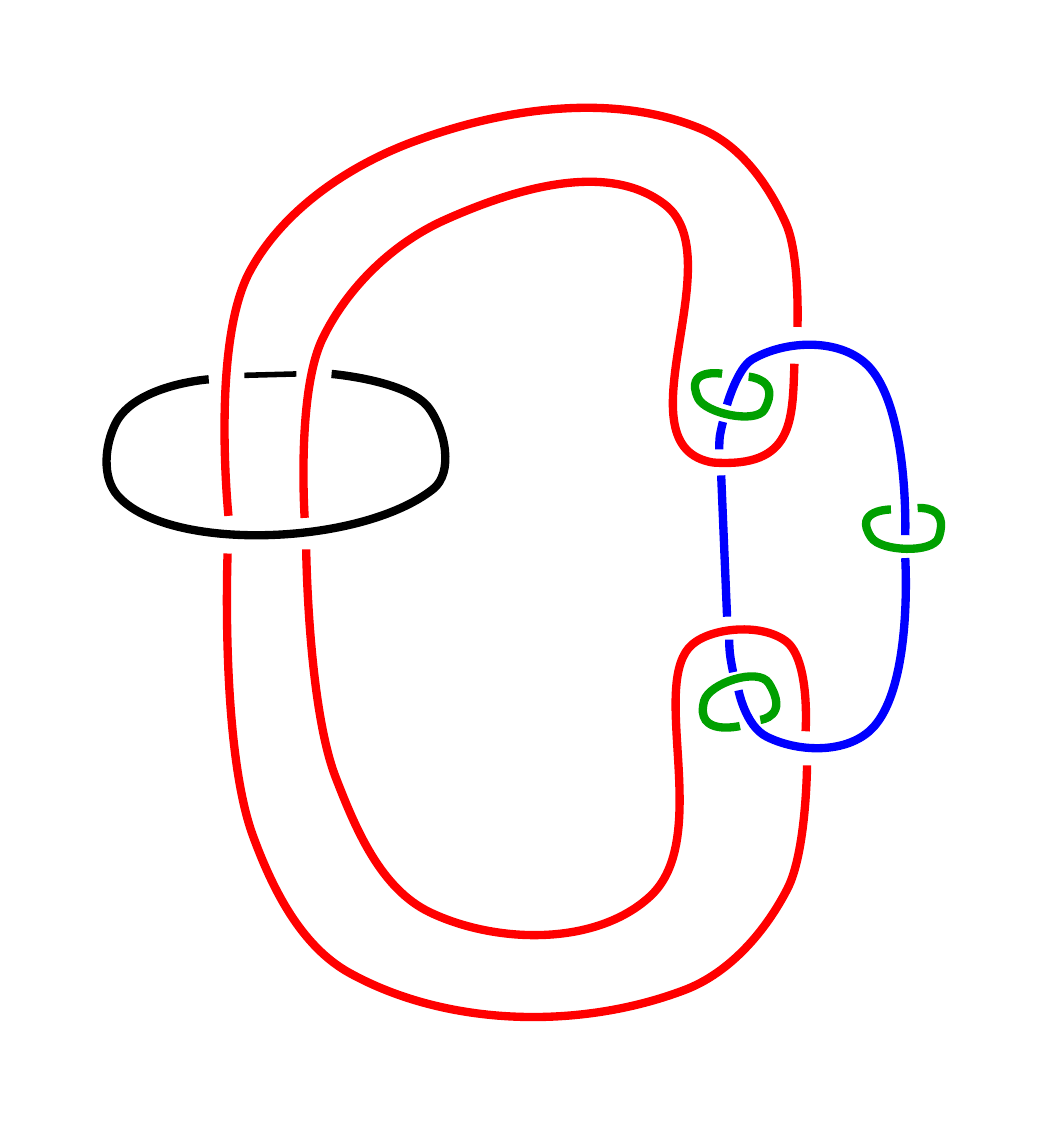}}
	\subfloat[]{\includegraphics[width=.25\textwidth]{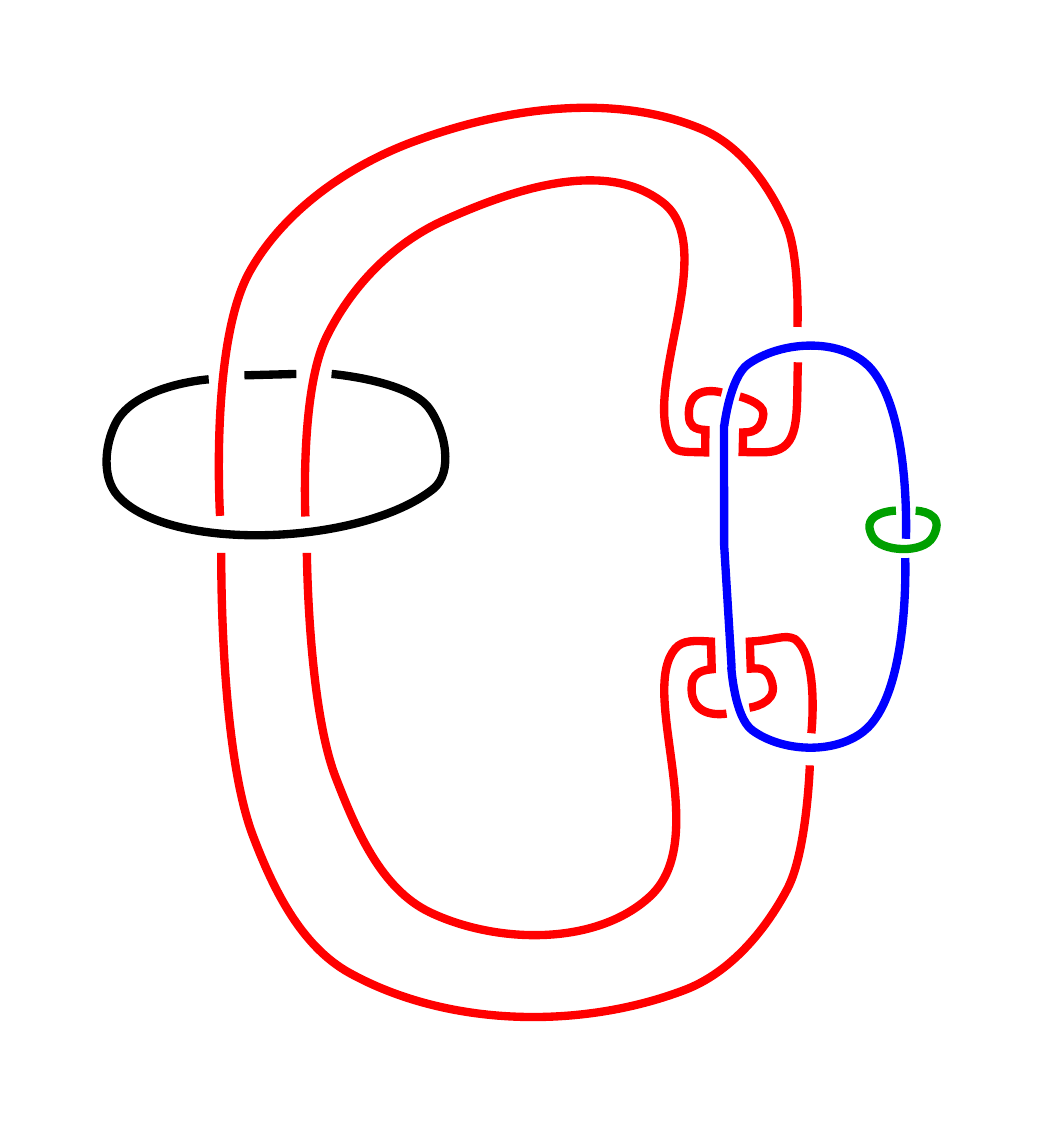}}
	\subfloat[]{\includegraphics[width=.25\textwidth]{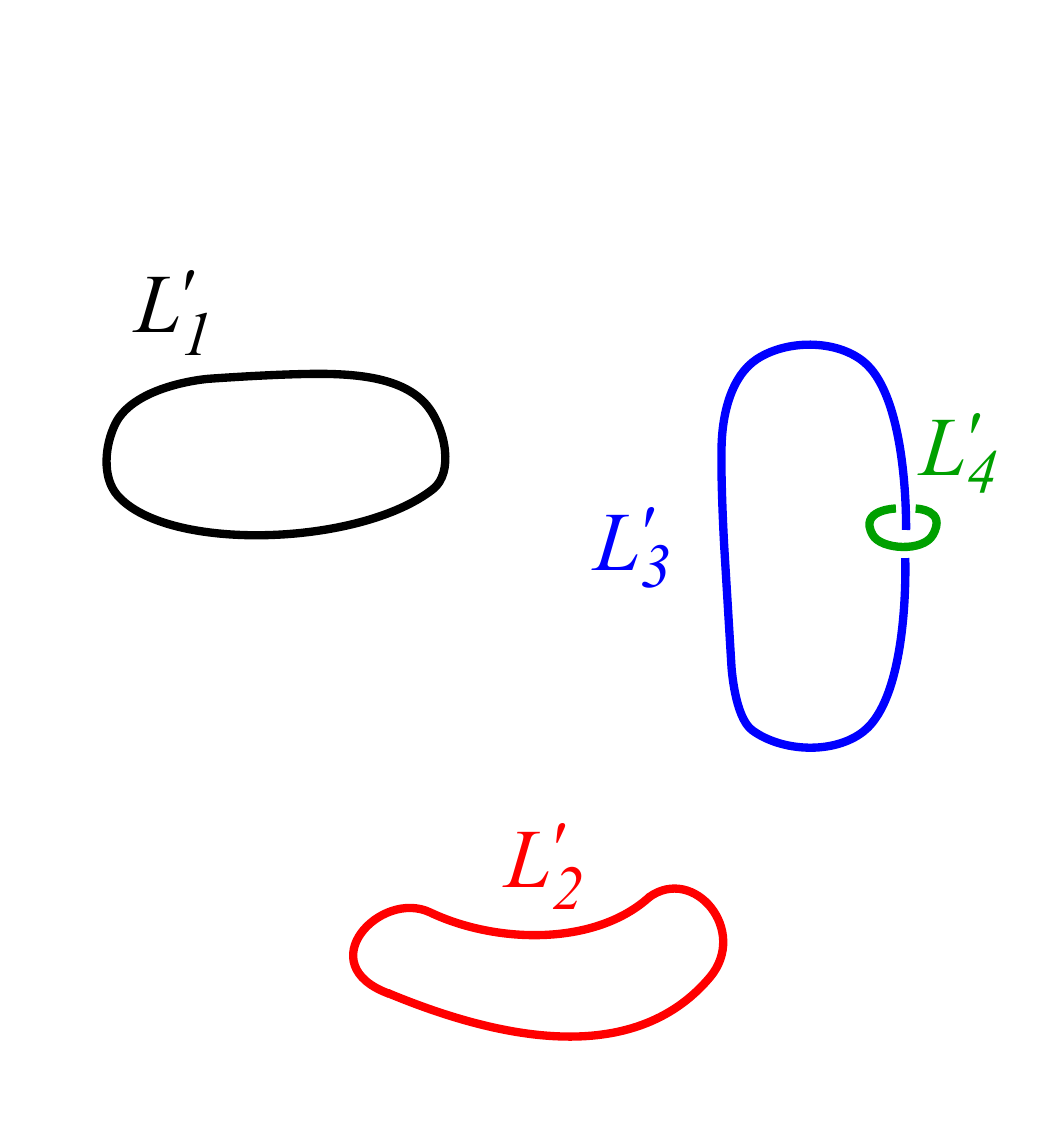}}
	\caption{Shake concordance between (A) and (D) with $\overline{\mu}_L(123)\neq \overline{\mu}_{L'}(123)$.}
	\label{fig:4thbring}
	\end{center}
\end{figure}
	
In general, though, not all Milnor invariants are preserved by shake concordance. For instance, consider the links $L$ in Figure \ref{fig:4thbring} (A)  and $L'$ in Figure \ref{fig:4thbring} (D). $L$ is shake concordant to $L'$. We can see this by taking a 3-component shaking of the component $L_4$ and adjoining  two of the parallel copies via band sum to $L_2$ as shown in Figure \ref{fig:4thbring} (B) and (C).

Notice, however, that the sublink $S=L_1\sqcup L_2\sqcup L_3$ of $L$ is the Borromean rings and hence we have $\overline{\mu}_{L}(123)=\overline{\mu}_{S}(123)=\pm1$, where the sign is determined by the orientations of the components. Meanwhile, the sublink $S'=L_1'\sqcup L_2'\sqcup L_3'$ of $L'$ is a trivial link and hence $\overline{\mu}_{L'}(123)=\overline{\mu}_{S'}(123)=0$.

Link concordance implies link homotopy \cite{giffen} \cite{goldsmith}. It is therefore natural to ask if shake concordance implies link homotopy. Milnor showed that a link is null homotopic if and only if all its $\overline{\mu}$ invariants with non-repeating invariants vanish \cite{milnor54}. Hence, if a link is shake concordant to the trivial link, the by Theorem \ref{thm:firstmilnor} all its $\overline{\mu}$ invariants vanish and hence it is it is null homotopic. Moreover, since 2-component links are classified up to link homotopy by linking number, it follows from Corollary \ref{cor:linkingpreserved} that if two 2-component links are shake concordant then they are link homotopic. However, in general shake concordance does not imply link homotopy. For instance, links $L$ in Figure \ref{fig:4thbring} (A) and $L'$ in Figure \ref{fig:4thbring} (D) are shake concordant but have differing $\bar{\mu}(123)$ and hence are not link concordant.

There also exists links that are strongly shake concordant but not link homotopic. For instance, consider links $L$ in Figure \ref{fig:strshconc} (A) and $L'$ in Figure \ref{fig:strshconc} (C). These links are strongly shake concordant and have identical $\overline{\mu}$ invariants. However, Figure \ref{fig:strshconc} shows that $L$ is ambient isotopic to the link in Figure \ref{fig:levine} (H) which Levine demonstrated is not link homotopic to $L'$  \cite[Section 13]{levine}.
\bibliographystyle{spmpsci}

\begin{figure}[!h]
	\begin{center}
	\subfloat[]{\includegraphics[width=.2\textwidth]{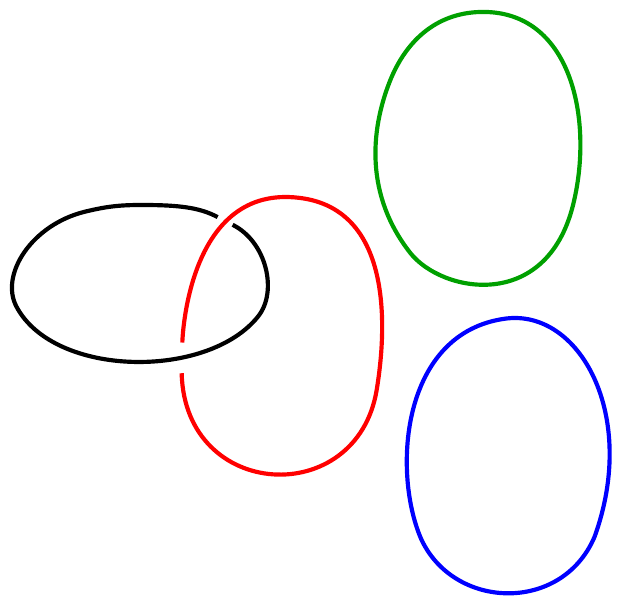}}\hspace{0.5in}
	\subfloat[]{\includegraphics[width=.2\textwidth]{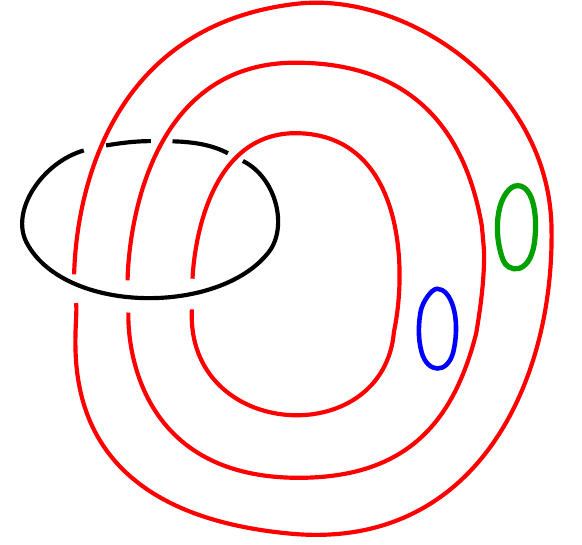}}\hspace{0.5in}
	\subfloat[]{\includegraphics[width=.2\textwidth]{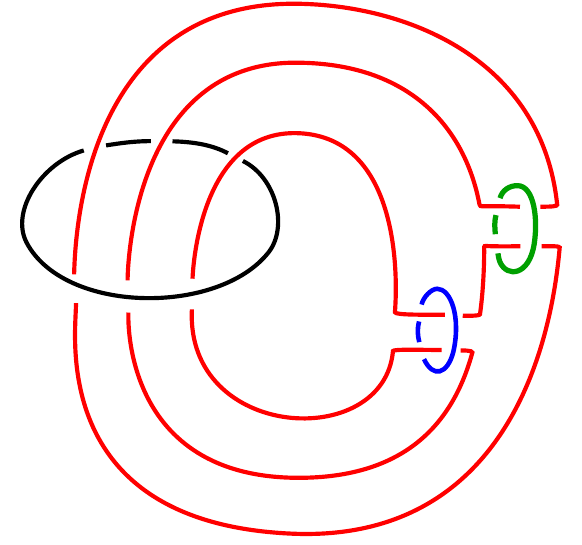}}
	\caption{A strong shake concordance between (A) and (C) as the link in (A) has shaking (B) which cobounds the genus zero surfaces of a strong shake concordance with (C).}
	\label{fig:strshconc}
	\end{center}
\end{figure}

\begin{figure}[!h]
	\begin{center}
	\subfloat[]{\includegraphics[width=.2\textwidth]{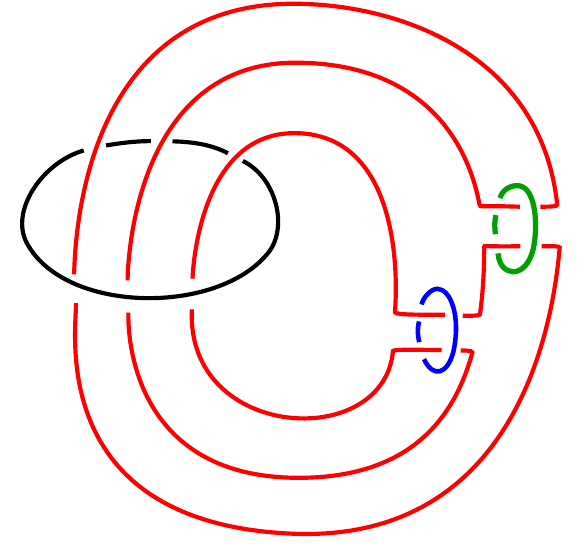}}\hspace{0.5in}
	\subfloat[]{\includegraphics[width=.2\textwidth]{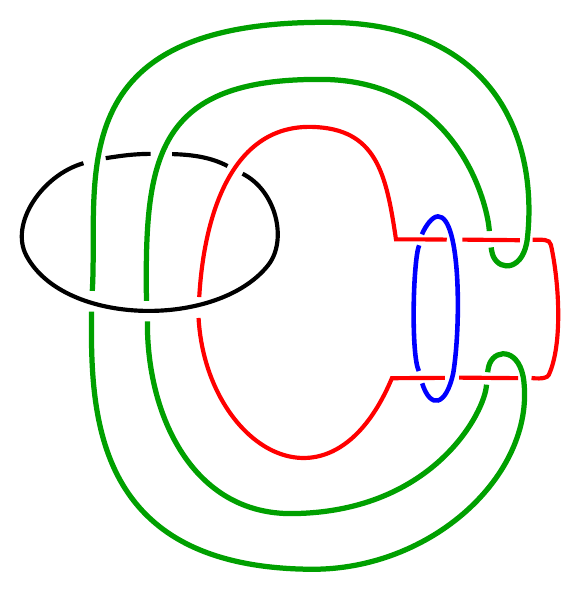}}\hspace{0.5in}
	\subfloat[]{\includegraphics[width=.2\textwidth]{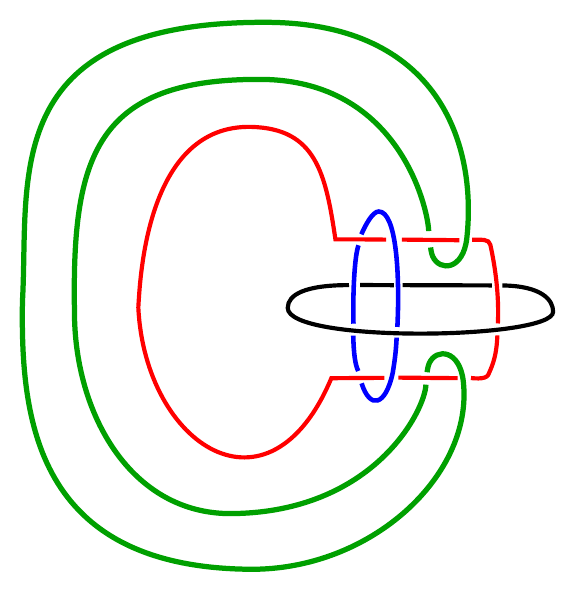}}\\
	
	\subfloat[]{\includegraphics[width=.2\textwidth]{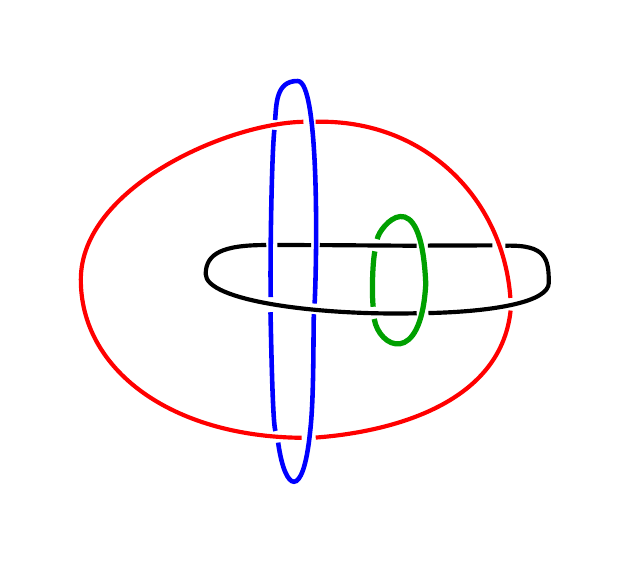}}\hspace{0.55in}
	\subfloat[]{\includegraphics[width=.2\textwidth]{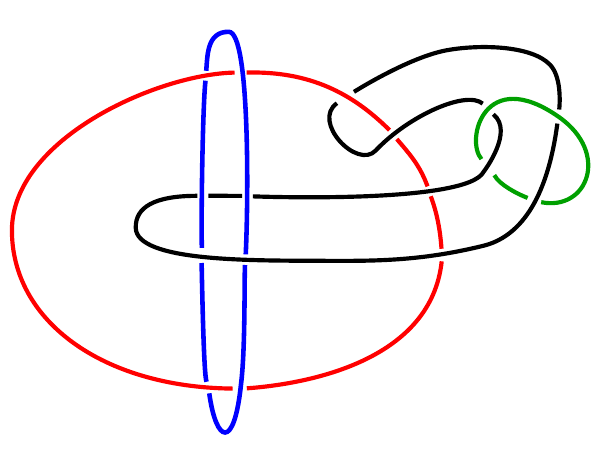}}\hspace{0.55in}
	\subfloat[]{\includegraphics[width=.2\textwidth]{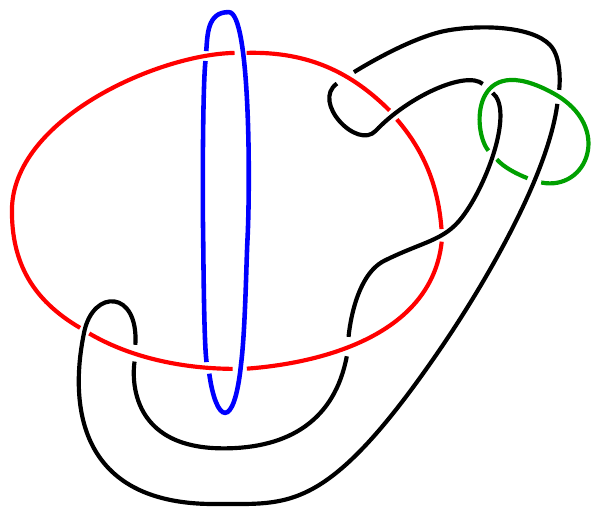}}\\
	
	\hspace{0.4in}
	\subfloat[]{\includegraphics[width=.2\textwidth]{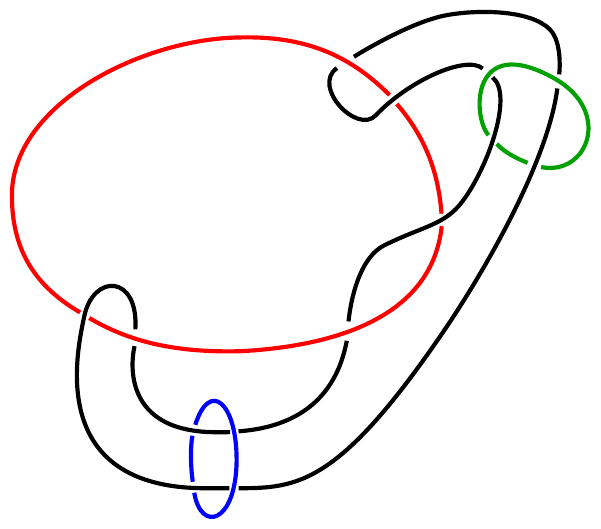}}\hspace{0.3in}
	\subfloat[]{\includegraphics[width=.3\textwidth]{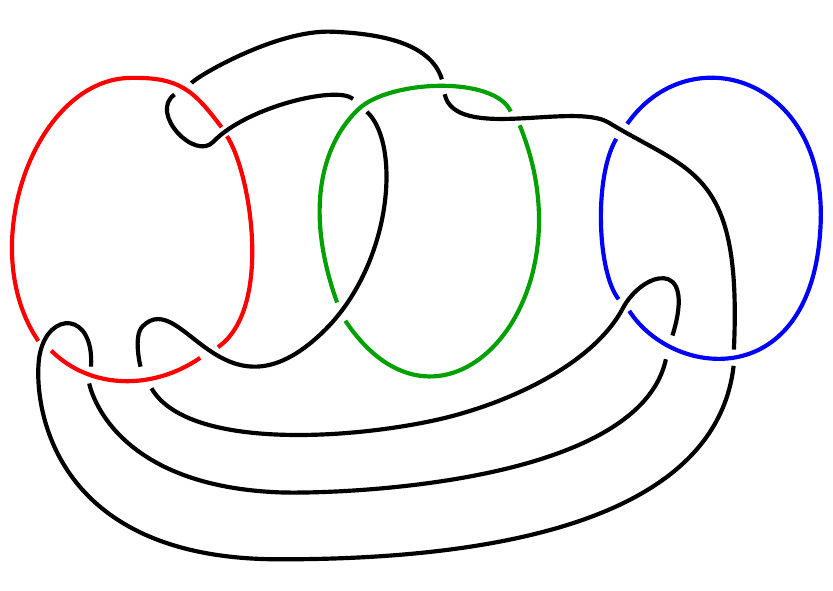}}
	\caption{An ambient isotopy from the link in (A) to the link in (H).}
	\label{fig:levine}
	\end{center}
\end{figure}

\pagebreak

\bibliographystyle{alpha}
\bibliography{bib}

\end{document}